\documentclass[12pt,reqno,oneside]{amsart}

\usepackage{amssymb}
\usepackage{amsmath}
\usepackage[latin1]{inputenc}

\setlength{\marginparwidth}{60 pt}

\usepackage[all]{xy}
\usepackage{pstricks, pst-3d}
%

%
%

%
%
\newcommand{\mm}{\mathfrak m}

\newcommand{\qq}{\mathfrak q}

%
%
\newcommand{\Z}{\mathbb{Z}}
\newcommand{\R}{\mathbb{R}}
\newcommand{\N}{\mathbb{N}}

%
%

\newcommand{\Fc}{\mathcal{F}}

\newcommand{\Mcc}{\mathcal{M}}

%
%

%
%

\DeclareMathOperator{\pnt}{\raise 0.5mm \hbox{\large\bf.}}

\DeclareMathOperator{\Spec}{Spec}

\DeclareMathOperator{\depth}{depth}
\DeclareMathOperator{\relint}{int}

\DeclareMathOperator{\chara}{char}

\DeclareMathOperator{\Hom}{Hom}

\DeclareMathOperator{\st}{star}

\def\+#1{\relax\ifmmode\if\noexpand #1\relax \mathop{\kern
   0pt^+{#1}}\nolimits\else \kern 0pt^+\!#1 \fi\else$^*$#1\fi}

\let\phi=\varphi
\let\:=\colon

%
%

\newtheorem{thm}{\bf Theorem}[section]
\newtheorem{lem}[thm]{\bf Lemma}
\newtheorem{cor}[thm]{\bf Corollary}
\newtheorem{prop}[thm]{\bf Proposition}

\newtheorem{quest}[thm]{\bf Question}

\theoremstyle{definition}
\newtheorem{defn}[thm]{\bf Definition}

\newtheorem{rem}[thm]{\bf Remark}
\newtheorem{ex}[thm]{\bf Example}

\theoremstyle{plain}
\newtheorem*{thm*}{Theorem}
\newtheorem*{quest*}{Question}

%
%
\textwidth=15 cm \textheight=22 cm \topmargin=0.5 cm
\oddsidemargin=0.5 cm \evensidemargin=0.5 cm \footskip=40 pt
%
%

\title{Seminormality and local cohomology of toric face rings}
\author{Dang Hop Nguyen}
\address{Fachbereich Mathematik/Informatik, Institut f\"ur Mathematik, Universit\"at Osnabr\"uck, Albrechtstr. 28a, 49069 Osnabr\"uck, Germany}
\email{nhop@uos.de}

\thanks{The author has been supported by the graduate school ``Combinatorial Structures in Algebra and Topology'' at the University of Osnabr\"uck, Germany.}

\begin{document}

\begin{abstract}
We characterize the toric face rings that are normal (respectively seminormal). Extending results about local cohomology of Brun, Bruns, Ichim, Li
and R\"omer of seminormal monoid rings and Stanley toric face rings, we prove the vanishing of certain graded parts of local cohomology of
seminormal toric face rings. The combinatorial formula we obtain generalizes Hochster's formula. We also characterize all (necessarily seminormal)
 toric face rings that are $F$-pure or $F$-split over a field of characteristic $p>0$. An example is given to show that $F$-injectivity does not
 behave well with respect to face projections of toric face rings. Finally, it is shown that weakly $F$-regular toric face rings are normal affine
monoid rings.
\end{abstract}

\maketitle
\bigskip

\section{Introduction}
\label{intro}

Combinatorial commutative algebra utilizes techniques and constructions from combinatorics to study problems in commutative algebra related to monomial subrings or monomial quotients of polynomial rings. One of the classical applications of combinatorial commutative algebra is the solution of Stanley \cite{Sta1} to the Upper Bound Conjecture proposed by Klee on triangulations of the sphere.

The class of toric face rings was first defined by Stanley in \cite{Sta2}. One of the main advantages of studying toric face rings is that we can often unify separated results in two seemingly unrelated classes of rings, namely Stanley-Reisner rings and affine monoid rings. Later this class was generalized by allowing more flexible monoidal structures and studied by M. Brun, W. Bruns, B. Ichim, R. Koch, T. R\"omer in \cite{BR}, \cite{BR2}, \cite{BKR} and R. Okazaki, K. Yanagawa in \cite{OkaYan}. The author considered the problem of characterizing Koszul toric face rings in \cite{Nguyen}.

Our basic objects of study in this paper are the following. We have a field $k$, a rational pointed fan in $\mathbb{R}^d$ denoted by $\Sigma$ (where $d\ge 1$) and a monoidal complex $\Mcc$ supported on $\Sigma$. Let $R=k[\Mcc]$ be the toric face ring of $\Mcc$. Naturally, $R$ is $\mathbb{Z}^d$-graded. We call toric face rings with these conditions {\em embedded} toric face rings. Denote by $\mm$ the unique graded maximal ideal of $R$.

B. Ichim and T. R\"omer \cite{IR} studied embedded toric face rings. They were able to extend various results from the theory of Stanley-Reisner rings to toric face rings. For example, if the underlying monoidal complex is {\em pure shellable} and the monoid rings $k[M_C]$ are Cohen-Macaulay for all $C\in \Sigma$, then the toric face ring $k[\Mcc]$ is also Cohen-Macaulay. This generalizes the result saying that a pure shellable simplicial complex is Cohen-Macaulay. Ichim and R\"omer also determined the canonical modules and Gorenstein criterion for {\em Stanley toric face rings}, i.e.~ toric face rings with $M_C=C\cap \Z^d$ for every $C\in \Sigma$. The method developed in \cite{IR} also gives a compact derivation of the formula of graded local cohomology of toric face rings in Brun, Bruns and R\"omer \cite{BBR}.

R. Okazaki and K. Yanagawa \cite{OkaYan} determined the dualizing complex of toric face rings under the normality assumption of the monoid rings involved. On the other hand, Okazaki and Yanagawa did not assume that the toric face rings considered are endowed with a $\mathbb{Z}^d$-grading. They also relaxed the condition ``$M_C=C\cap \Z^d$ for every $C\in \Sigma$''. Thus they provided more general results than those of Ichim and R\"omer \cite{IR}. They also defined squarefree modules over a toric face ring and characterize the Cohen-Macaulay, Buchsbaum and Gorenstein${}^*$ properties of toric face rings with the normality assumption.

In an interesting paper, Bruns, Li and R\"omer \cite{BLR} considered seminormal affine monoid rings, generalized on the way the known results for the classical case of normal affine monoid rings. For example, they give certain results extending Hochster's theorem on Cohen-Macaulayness of normal affine monoid rings. One of the main purposes of this paper is to extend further the results of \cite{BLR} and \cite{IR} to seminormal toric face rings and toric face rings with a $\mathbb{Z}^d$-grading in general. Somewhat surprisingly, we find that the method of studying Stanley-Reisner rings can give new results in the case of seminormal toric face rings and thus, new results in the case of seminormal affine monoid rings, see Section \ref{depth}.

The paper is organized as follows. In Section \ref{background}, we recall the basic notions and results concerning convex geometry
of cones and polytopes, affine monoids and monoidal complexes, toric face rings and seminormality of monoids. In Section \ref{semi},
 we prove that normal toric face rings are precisely normal affine monoid rings. We prove that $R$ is seminormal if and only if
for all $C\in \Sigma$, the monoid $M_C$ is seminormal. We explain the construction of the seminormalization of $\Mcc$ following
Bruns and Gubeladze \cite[Exercise 8.13]{BG} at the end of Section \ref{semi}.

In the Section \ref{lc}, we extend the results of Bruns, Li and R\"omer concerning local cohomology of seminormal monoid rings. We prove the following vanishing result for seminormal toric face rings.
\begin{thm*}
Let $\Sigma$ be a rational pointed fan in $\mathbb{R}^d$ (where $d\ge 1$), $\Mcc$ a seminormal monoidal complex supported
on $\Sigma$, and $R=k[\Mcc]$. Assume that $H^i_{\mm}(R)_a \neq 0$ for some $a\in \mathbb{Z}^d$.
Then $a\in - \overline{M_C}$ for a cone $C\in \Sigma$ of dimension $\le i$. In particular,
\[
H^i_{\mm}(R)_a = 0 ~ \textnormal{if} ~ a\notin -|\overline{\mathcal M}|=\cup _{C\in \Sigma}(-\overline{M_C}).
\]
\end{thm*}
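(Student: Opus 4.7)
The approach is to reduce the problem to the seminormal monoid ring case already handled by Bruns, Li and R\"omer \cite{BLR}, using the machinery of Ichim and R\"omer \cite{IR} to pass from the toric face ring $R$ to the constituent affine monoid rings $k[M_C]$. The key bridge is the characterization established in Section~\ref{semi}: seminormality of $\Mcc$ is equivalent to seminormality of each individual $M_C$, so $k[M_C]$ is a seminormal affine monoid ring for every $C \in \Sigma$. This lets the two technologies be concatenated.

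First I would set up the complex of \cite[Section~3]{IR} (extending the formula of \cite{BBR}) associated to the fan $\Sigma$: its terms are direct sums $\bigoplus_{\dim C = j} k[M_C]$, the differentials are signed face projections, and the total complex is a resolution of $R$ in a sense suitable for computing local cohomology. Applying $H^*_{\mm}(-)$ term-by-term yields a spectral sequence with $E_1^{p,q}$-page built from $H^q_{\mm}(k[M_C])$, indexed by $\dim C$ in a manner determined by $p$, and converging to $H^{p+q}_{\mm}(R)$. Passing to the $a$-graded part commutes with every step, so the question reduces to asking which pieces $H^q_{\mm}(k[M_C])_a$ can be nonzero.

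Next I would invoke the main vanishing theorem of \cite{BLR}: for a seminormal affine monoid $M_C$, nonvanishing of $H^q_{\mm}(k[M_C])_a$ forces $a \in -\overline{M_C}$. Combined with the elementary dimension vanishing $H^q_{\mm}(k[M_C]) = 0$ for $q > \dim C$, any nonzero contribution to $H^i_{\mm}(R)_a$ on the abutment must arise from some cone $C$ with $\dim C \le i$ satisfying $a \in -\overline{M_C}$; this is precisely the conclusion of the theorem. The ``in particular'' clause then follows by taking the union over all cones that can possibly contribute.

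The principal obstacle is verifying that the degree bookkeeping in the Ichim-R\"omer complex truly delivers the sharp dimension bound $\dim C \le i$ rather than merely $\dim C \le d$; one must track how the homological degree $p$ of the resolution interacts with the local cohomology degree $q$ so that only cones of dimension at most $i$ contribute to $H^i_{\mm}(R)_a$. If the spectral-sequence argument loses sharpness, a natural fallback is induction on the number of maximal cones of $\Sigma$ via the Mayer-Vietoris sequence attached to a decomposition $\Sigma = \Sigma_1 \cup \Sigma_2$ into proper subfans, yielding $R = k[\Mcc|_{\Sigma_1}] \times_{k[\Mcc|_{\Sigma_1\cap\Sigma_2}]} k[\Mcc|_{\Sigma_2}]$; each inductive step is routine once the base case of a single cone, provided by \cite{BLR}, is in hand.
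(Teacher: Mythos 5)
Your fallback -- induction on the number of maximal cones via the Mayer--Vietoris sequence, with base case the seminormal monoid ring result of \cite{BLR} -- is exactly the route the paper takes; the paper peels off one maximal cone $C$ at a time, writing $\Sigma = \Sigma_1 \cup \Fc(C)$ with $\Sigma_1 = \Sigma \setminus \{C\}$, and runs nested inductions on $i$, on $\dim\Sigma$, and on the number of top-dimensional cones. So the second half of your proposal is essentially the paper's proof.

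The primary spectral-sequence route, as you wrote it, has two real gaps. First, the complex whose $j$-th term is $\bigoplus_{\dim C = j} k[M_C]$ with signed face projections is \emph{not} a resolution of $R$: in a fixed multidegree $b \ne 0$ it is (up to reindexing) the relative cochain complex $\mathcal C^\bullet(\Gamma_{\st_\Sigma(D)})$ of the star of the minimal cone $D$ carrying $b$, and that relative cohomology is generally nonzero in several degrees. A complex that \emph{is} exact is the \v Cech complex on the maximal cones, $F^p = \bigoplus_{|I|=p+1} k[M_{C_I}]$ with $C_I = \bigcap_{i\in I}C_i$; in each multidegree this is the augmented cochain complex of a full simplex (using $M_{C\cap D} = M_C \cap M_D$), hence exact, and it does give the hypercohomology spectral sequence $E_1^{p,q} = \bigoplus_{|I|=p+1} H^q_\mm(k[M_{C_I}]) \Rightarrow H^{p+q}_\mm(R)$. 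Second, and independently, the dimension bookkeeping you describe does not close: ``$H^q_\mm(k[M_C]) = 0$ for $q > \dim C$'' gives a \emph{lower} bound $\dim C_I \ge q$, not the desired upper bound $\dim C_I \le i$, and combining it with the weak conclusion ``$a \in -\overline{M_{C_I}}$'' therefore produces a cone whose dimension you cannot control. What saves the argument is the sharp form of \cite[Theorem 4.3]{BLR} (the same statement the paper invokes in its own proof): for a seminormal affine monoid $M$, if $H^q_\mm(k[M])_a \ne 0$ then $a \in -\overline{M_D}$ for some \emph{face} $D$ of $\R_+M$ with $\dim D \le q$. Feeding that into the \v Cech spectral sequence, a nonzero $E_1^{p,q}$ with $p+q=i$ yields a face $D$ of some $C_I$ with $\dim D \le q \le i$ and $a \in -\overline{M_D}$; since $D$ is a face of a cone of $\Sigma$, $D \in \Sigma$, and the theorem follows. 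With that repair the spectral-sequence route is a valid alternative to the paper's Mayer--Vietoris induction, and arguably packages the three nested inductions into a single gadget; but as written the proposal conflates the weak and strong forms of the \cite{BLR} vanishing and proposes a complex that is not a resolution.
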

After proving a combinatorial formula computing local cohomology of seminormal toric face rings, we ask whether seminormalization
can be characterized by vanishing of certain graded parts of local cohomology.
\newpage
\begin{quest*}
Is it true that for a toric face ring $R=k[\Mcc]$ the following statements are equivalent?
\begin{enumerate}
 \item $\mathcal M$ is seminormal;
\item $H^i_{\mm}(R)_a=0$ for all $i$ and all  $a\in \mathbb{Z}^d$ such that $a\notin -|\overline{\mathcal M}|$.
\end{enumerate}
\end{quest*}
Note that this is indeed the case for affine monoid rings, as proved in \cite[Theorem 4.7]{BLR}.

Hochster described graded local cohomology of Stanley-Reisner rings in terms of combinatorial data of the associated simplicial complexes. In the second part of Section \ref{lc} we extend Brun, Bruns and R\"omer's generalized version of Hochster formula for seminormal toric face rings via Theorem \ref{lc-combin}. We deduce the formula of Brun, Bruns and R\"omer at the end of this section.

In Section \ref{depth}, we apply the combinatorial description of local cohomology (Theorem \ref{lc-combin}) to study the depth of seminormal toric face rings. We are able to prove a rank-selection theorem describing depth in terms of the skeletons of the monoidal complex. This is well-known for simplicial complexes but is new for affine monoids to our knowledge.

In the last section, given a toric face ring $k[\Mcc]$ over a field $k$ with $\chara k=p>0$, we consider the problem of characterizing the $F$-purity and $F$-splitness of $k[\Mcc]$. Using a result of Schwede \cite{Sch}, we prove that firstly $\Mcc$ is seminormal as long as $k[\Mcc]$ is $F$-injective. Then we describe all possible values of $p$ depending on $\Mcc$ such that $k[\Mcc]$ is $F$-pure. These are also the values of $p$ such that $k[\Mcc]$ is $F$-split. We give an example showing that in general, $F$-injectivity is not stable under projections of a toric face ring onto its faces. In the end, we prove that weak $F$-regularity is a strong condition, so that a toric face rings is weakly $F$-regular if and only if it is a normal affine monoid ring.

The main content of this paper is part of the author's PhD dissertation \cite{Nguyenthesis}.
\section{Preliminaries}
\label{background}

In this paper, we denote by $\R, \R_+, \Z$ the sets of real numbers, non-negative real numbers and integer numbers, respectively.

We consider $\R^d$ with fixed coordinates (where $d\ge 1$). A {\em linear hyperplane} in $\R^d$ is a hyperplane which contains
the origin. A linear hyperplane in $\R^d$ defines two closed {\em linear half spaces} of $\R^d$. A {\em cone} $C$ in $\R^d$ is
 a finite intersection of linear half spaces.

We say that $C$ is {\em rational}, if each of the hyperplanes which cut out $C$ is defined by a homogeneous linear
equation with integral coefficients.

A {\em supporting hyperplane} $H$ of $C$ is a linear hyperplane for which $C$ is contained in one of the two half spaces defined
by $H$ and $H\cap C\neq \emptyset$. The non-empty intersection of $C$ with a supporting hyperplane is called a {\em face} of $C$.
Faces of a cone are again cones. The set of all faces of $C$, including $C$ itself forms a finite partially ordered set
(by inclusion) called the {\em face poset} of $C$, since the relation of being a face of a cone is transitive. We denote the
face poset of a cone $C$ by $\Fc(C)$.

$C$ is called {\em pointed} if $\{0\}$ belongs to the face poset $\Fc(C)$. We assume that the cones considered in this paper are rational and pointed.

An {\em affine monoid} $M$ is a finitely generated submonoid of $\Z^d$ for some $d\ge 1$. Denote by $\Z M$ the subgroup $\{x-y:x,y\in M\}$ of
$\Z^d$. The set of finite non-negative real combinations of elements from $M$ is denoted by $\R_+M$. In fact, $\R_+M$ is a cone in $\R^d$.

We say that $M$ is {\em positive} if $z,-z\in M$ implies that $z=0$. We have $\R_+M$ is a pointed cone if and only if $M$ is a positive monoid.

The {\em normalization} of $M$ in $\Z M$ is the monoid
\[
\overline{M}=\{x\in \Z M: px\in M ~ \text {for some $p\in \Z, ~ p>0$} \}.
\]
Gordan's lemma says that $\overline{M}$ is also an affine monoid. $M$ is a {\em normal monoid} if $M=\overline{M}$.
\begin{defn}
A set $\Sigma$ of cones in $\R^d$ is called a {\em fan} if the two conditions:
\begin{enumerate}
\item if $C\in \Sigma$ and $D$ is a face of $C$ then $D\in \Sigma$;
\item if $C,C'\in \Sigma$ then $C\cap C'$ is a common face of $C$ and $C'$,
\end{enumerate}
are satisfied.
\end{defn}
If all the cones in $\Sigma$ are rational and pointed then we call $\Sigma$ a {\em rational pointed fan}.
Let $\Sigma$ be a rational pointed fan in $\R^d$. An {\em embedded monoidal complex} $\Mcc$ supported on $\Sigma$
is a collection of affine monoids $M_C$ parameterized by $C\in \Sigma$ satisfying the following conditions:
\begin{enumerate}
\item $M_C\subseteq C\cap \Z^d$ and $\R_+M_C=C$ for each $C\in \Sigma$;
\item if $D$ is a face of $C\in \Sigma$ then $M_D= M_C\cap D$.
\end{enumerate}
Denote by $|\Mcc|$ the set $\cup_{C\in\Sigma}M_C$, called the support of $\Mcc$. Let $\{a_1,\ldots,a_n\}$ be a set of generators
for $|\Mcc|$, i.e., $\{a_1,\ldots,a_n\}\cap M_C$ generates $M_C$ for each $C\in\Sigma$. Denote $S=k[X_1,\ldots,X_n]$.

Assume that $C_1,\ldots,C_r$ are the maximal cones of $\Sigma$, and denote $M_i=M_{C_i}, i=1,\ldots,r$. Let $I_1,\ldots,I_r$ be
the binomial defining ideal of the affine monoid rings $k[M_1], \ldots, k[M_r]$, respectively, thus $I_t$ is the kernel of the epimorphism
\[
k[X_i:i\in\{1,2,\ldots,n\},a_i\in M_t]\twoheadrightarrow k[M_t].
\]
We will define the {\em toric face ideal} as
\[
I_{\Mcc}=A_{\Mcc}+ \sum_{i=1}^r S\cdot I_i,
\]
where $A_{\Mcc}$ is the ideal generated by all the squarefree monomials $X_{i_1}\cdots X_{i_j}$ for which $\{a_{i_1},\ldots,a_
{i_j}\}\nsubseteq C_i$ for all $i=1,\ldots,r$.

Then the {\em toric face ring} of $\Mcc$ over $k$ is $k[\Mcc]=S/I_{\Mcc}$. Note that $k[\Mcc]$ does not depend on the generators
$a_1,\ldots,a_n$. We also have that as a $k$-vector space
\[
k[\Mcc] = \bigoplus_{a\in |\Mcc|}k\cdot X^a,
\]
and the multiplication on $k[\Mcc]$ is given by the addition on the monoids $M_C$,
\[
X^a\cdot X^b= \begin{cases}
              X^{a+b} & \text{if for some $C\in \Sigma$ both $a$ and $b$ belong to $M_C$};\\
                       0 & \text{otherwise.}
              \end{cases}
\]
For $a\in |\Mcc|$, sometimes we write $a$ instead of $X^a$, and $ab$ instead of $X^a\cdot X^b$. It would be clear from the
context which meaning should be attributed to the corresponding notation.
In the case $M_C=C\cap \Z^d$ for all $C\in \Sigma$, we call the resulting ring $k[\Sigma]$ a {\em Stanley toric face ring}.
\begin{ex}
(i) If all the cones $C\in \Sigma$ are generated by linearly independent vectors and $M_C$ is generated by $\dim C$ elements, then for $i=1,\ldots,r$, the ideal $I_i=0$ and $k[\Mcc]$ is a {\em Stanley-Reisner ring}.

(ii) On the other hand, if $r=1$ or in other words, $\Sigma$ is the face poset of a cone, then $k[\Mcc]$ is the {\em affine monoid ring}
$k[M_1]$.
\end{ex}
Thus one can say that toric face rings are a natural generalization of Stanley-Reisner rings and affine monoid rings. The reader might wish
to consult Bruns-Herzog \cite{BH}, Chapter 5 and 6 for a detailed discussion of these two kind of rings.

\begin{ex}[\cite{Nguyen}, Example 4.6]
Consider the points in $\R^3$ with the following coordinates $O =(0,0,0), A_1 = (2,0,0),
A_2=(0,2,0), A_3 = (0,0,2), A_4 = (1,1,0).$

Consider the rational pointed fan $\Sigma$ in $\R^3$ with the maximal cones $\normalfont {OA_1A_2}$, $\normalfont {OA_1A_3}$, and
$\normalfont{OA_2A_3}$. In other words, $\Sigma$ is the boundary complex of the cone generated by the three positive axes of $\R^3$. Let $\Mcc$ be
 the monoidal complex supported on $\Sigma$ with the three maximal monoids generated by $\{A_1, A_2, A_4\}$, $\{A_1,A_3\}$ and $\{A_2,A_3\}$.

The defining ideal of the corresponding toric face ring is
$$
I_{\Mcc}= (X_3X_4, X_1X_2X_3)+ (X_1X_2-X_4^2)=(X_1X_2-X_4^2, X_3X_4).
$$
Thus the toric face ring of $\Mcc$ is
$$
k[\Mcc]=k[X_1,X_2,X_3,X_4]/(X_1X_2-X_4^2, X_3X_4).
$$

\bigskip
\bigskip

\setlength{\unitlength}{4cm}
\begin{picture}(1,1)
\thicklines
\put(1,0){\line(0,1){1}}
\put(1,0){\line(1,0){1}}
\put(1,0){\line(1,1){0.85}}

\put(1.01,-0.09){$O$}
\put(1.05,0.85){$A_1$}
\put(1,0.8){\circle*{0.04}}
\put(1.85,0.05){$A_3$}
\put(1.8,0){\circle*{0.04}}
\put(1.65,0.58){$A_2$}
\put(1.6,0.6){\circle*{0.04}}
\put(1.35,0.75){$A_4$}
\put(1.3,0.7){\circle*{0.04}}

\put(1,0.8){\line(3,-1){.6}}
\put(1.8,0){\line(-1,3){.2}}
\multiput(1,0.8)(0.03,-0.03){27}{\circle*{0.01}}
\end{picture}
\bigskip
\bigskip

\end{ex}

Next, we recall the projection on to the faces of a toric face ring. We have a natural inclusion $k[M_C]\hookrightarrow k[\Mcc]$ for
 every $C\in \Sigma$. Moreover, we have the {\em face projection} morphism $k[\Mcc]\twoheadrightarrow k[M_C]$ which is given by
$$
X^a\mapsto \begin{cases}
              X^a & \text{if $a\in M_C$};\\
                0 & \text{otherwise.}
              \end{cases}
$$
Note that the composition of $k[M_C]\hookrightarrow k[\Mcc]$ and $k[\Mcc]\twoheadrightarrow k[M_C]$ is the identity of $k[M_C]$.
Thus $k[M_C]\hookrightarrow k[\Mcc]$ is an {\em algebra retract} for each $C\in \Sigma$.

For each $C, D\in \Sigma$ with $D$ is a face of $C$, we also have a natural projection map $k[M_C] \twoheadrightarrow k[M_D]$
defined in the same way.

\begin{prop}[\cite{BKR}, Proposition 2.2]
\label{inverselimit}
We always have
$k[\Mcc] \cong \varprojlim  k[M_C]$.
\end{prop}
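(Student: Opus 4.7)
The plan is to exhibit the isomorphism directly by using the face projections, then check bijectivity on the monomial basis. The inverse limit is taken over the face poset $\Sigma$ with transition maps $\pi_{DC}\colon k[M_C]\twoheadrightarrow k[M_D]$ for $D$ a face of $C$, given by the same recipe (send $X^a$ to $X^a$ if $a\in M_D$ and to $0$ otherwise).

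First I would check that the retractions $\pi_C\colon k[\Mcc]\twoheadrightarrow k[M_C]$ form a compatible cone: for $D$ a face of $C$, both $\pi_D$ and $\pi_{DC}\circ\pi_C$ send a monomial $X^a$ to $X^a$ when $a\in M_D$ and to $0$ otherwise, where I use the monoidal-complex identity $M_D=M_C\cap D$. The universal property of the inverse limit then produces a canonical $k$-algebra homomorphism $\Phi\colon k[\Mcc]\to\varprojlim k[M_C]$. Injectivity is immediate on the monomial basis: if $f=\sum_{a\in|\Mcc|}c_aX^a$ satisfies $\pi_C(f)=0$ for every $C$, then for each $a\in|\Mcc|$ one can choose $C\in\Sigma$ with $a\in M_C$ and read off $c_a$ as the coefficient of $X^a$ in $\pi_C(f)=0$.

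For surjectivity I would take a compatible family $(f_C)$ with $f_C=\sum_{a\in M_C}c_{C,a}X^a$. The compatibility conditions translate to $c_{C,a}=c_{D,a}$ whenever $D\leq C$ and $a\in M_D$. The key step, and the main obstacle, is well-definedness of the glued coefficients: I must verify that $c_{C,a}=c_{C',a}$ whenever $a\in M_C\cap M_{C'}$ for two cones $C,C'\in\Sigma$. For this I would invoke the fan axiom $C\cap C'\in\Sigma$ together with the identity $M_{C\cap C'}=M_C\cap(C\cap C')$ to place $a$ in the common face $M_{C\cap C'}$, and then apply compatibility twice. Setting $c_a:=c_{C,a}$ for any $C$ containing $a$, the element $f:=\sum_{a\in|\Mcc|}c_aX^a$ is a finite sum since $\Sigma$ is finite and each $f_C$ has finite support, and by construction $\pi_C(f)=f_C$ for every $C$, so $\Phi(f)=(f_C)$. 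Once $\Phi$ is bijective, it is automatically a $k$-algebra isomorphism, since it was built as a ring homomorphism via the universal property.
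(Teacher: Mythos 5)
Your argument is correct and is the natural one for this statement. The paper itself does not supply a proof; it cites \cite{BKR}, Proposition 2.2, so there is no in-text argument to compare against. The content of your verification is exactly what one expects the cited proof to amount to: the face projections $\pi_C$ form a compatible cone over the poset $\Sigma$ because $M_D = M_C\cap D$ for a face $D$ of $C$; the induced map $\Phi$ is injective because every $a\in|\Mcc|$ lies in some $M_C$ and $\pi_C$ recovers the coefficient of $X^a$; and surjectivity comes from gluing, where the decisive step is that the fan axiom $C\cap C'\in\Sigma$ together with $M_{C\cap C'}=M_C\cap(C\cap C')=M_{C'}\cap(C\cap C')$ forces $a\in M_C\cap M_{C'}$ to lie in $M_{C\cap C'}$, so the two coefficients are linked through the common face. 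You also correctly observe that finiteness of $\Sigma$ and of the supports of the $f_C$ guarantees the glued element $f$ is an honest (finite) element of $k[\Mcc]$, and that bijectivity of a ring homomorphism built from the universal property gives an isomorphism of $k$-algebras. No gaps.
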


For more discussions of basic ring-theoretic properties of toric face rings, we refer the reader to \cite{IR}.

\begin{defn}[Swan \cite{Swa}]
\label{seminormalitydefn}
A reduced ring $R$ is called {\em seminormal} if whenever $x,y\in R$ are such that $x^2=y^3$, then we find a $z\in R$ such that
$x=z^3, y=z^2$.
\end{defn}
Equivalently, a reduced ring $R$ is seminormal if and only if for every $z$ in the total ring of fractions $Q(R)$ of $R$ such that
$z^2, z^3\in R$, we have $z\in R$. The following result follows easily from Definition \ref{seminormalitydefn}.

\begin{prop}[\cite{Swa}, Corollary 3.3]
\label{inverselimitofseminormality}
If $R= \varprojlim R_{\alpha}$ and all $R_{\alpha}$ are seminormal, then $R$ is seminormal.
\end{prop}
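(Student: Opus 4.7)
The plan is to use the universal property of the inverse limit: elements of $R$ are precisely compatible families $(z_\alpha)_\alpha$ with $z_\alpha\in R_\alpha$. Write $\pi_\alpha\:R\to R_\alpha$ for the structural projections and, for each transition morphism $\beta\to\alpha$ in the indexing diagram, $\phi_{\alpha\beta}\:R_\beta\to R_\alpha$ for the corresponding map, so that $\pi_\alpha=\phi_{\alpha\beta}\circ\pi_\beta$. Reducedness of $R$ is immediate: if $a^n=0$ in $R$, then $\pi_\alpha(a)^n=0$ for every $\alpha$, so $\pi_\alpha(a)=0$ by reducedness of $R_\alpha$, and hence $a=0$ since the $\pi_\alpha$ jointly separate points of $R$.

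Given $x,y\in R$ with $x^2=y^3$, seminormality of each $R_\alpha$ applied to the pair $\pi_\alpha(x),\pi_\alpha(y)$ produces some $z_\alpha\in R_\alpha$ with $z_\alpha^3=\pi_\alpha(x)$ and $z_\alpha^2=\pi_\alpha(y)$. The heart of the argument is to show that $z_\alpha$ is uniquely determined by these two conditions; granting this, compatibility of the family $(z_\alpha)$ across transition maps follows at once, since $\phi_{\alpha\beta}(z_\beta)\in R_\alpha$ also satisfies the same cube and square equations and must therefore coincide with $z_\alpha$.

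For uniqueness, suppose $u,v\in R_\alpha$ satisfy $u^2=v^2$ and $u^3=v^3$. Since $R_\alpha$ is reduced, it embeds into the product $\prod_\pp(R_\alpha)_\pp$ over its minimal primes, and each localization $(R_\alpha)_\pp$ is a field; so it suffices to check $u=v$ in each of those fields. There the argument is trivial: if $v=0$, then $u^2=0$ forces $u=0$; otherwise $u/v$ is simultaneously a square and a cube root of $1$, and $u/v=(u/v)^3/(u/v)^2=1$. With uniqueness in hand, the family $(z_\alpha)$ assembles to an element $z\in\varprojlim R_\alpha=R$; the separating property of the $\pi_\alpha$ then upgrades the identities $\pi_\alpha(z^3)=\pi_\alpha(x)$ and $\pi_\alpha(z^2)=\pi_\alpha(y)$ to $z^3=x$ and $z^2=y$ in $R$.

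The only real difficulty is the uniqueness of cube-and-square roots in a reduced ring; everything else is formal inverse-limit gluing. I expect the cleanest extraction of uniqueness to be the reduction to fields via minimal primes, which is the one step where the reducedness hypothesis in Definition \ref{seminormalitydefn} is essentially used.
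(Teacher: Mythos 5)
Your proof is correct, and the paper itself offers no argument for this proposition, deferring entirely to Swan; so there is no ``paper's proof'' to compare against. The idea you isolate --- uniqueness, in a reduced ring, of an element $z$ with prescribed $z^2$ and $z^3$ --- is indeed the crux, and it is what lets the componentwise choices $z_\alpha$ assemble into a compatible family. That said, your route to uniqueness through minimal primes and localizations is heavier than necessary (and it quietly uses the injectivity of $R_\alpha\to\prod_\pp (R_\alpha)_\pp$, which, while true for a reduced ring, deserves a word). One can instead argue entirely elementarily: if $u^2=v^2$ and $u^3=v^3$ in a reduced ring, then
\[
(u-v)^3=u^3-3u^2v+3uv^2-v^3=u^3-3v^3+3u^3-v^3=4\bigl(u^3-v^3\bigr)=0,
\]
using $u^2v=v^2\cdot v=v^3$ and $uv^2=u\cdot u^2=u^3$ in the middle step, and reducedness then forces $u=v$. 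This is the one-line lemma Swan records, and it avoids the structure theory altogether. The rest of your argument --- reducedness of $R=\varprojlim R_\alpha$ because the $\pi_\alpha$ jointly separate points, compatibility of $(z_\alpha)$ across transition maps forced by uniqueness, and the upgrade of componentwise identities to $z^2=y$ and $z^3=x$ --- is the correct formal gluing and is essentially the standard proof.
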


An affine monoid $M$ is called {\em seminormal} if for every $x\in \Z M$ such that $2x, 3x\in M$, we have $x\in M$.
Note that a normal monoid is always seminormal. Denote $\+M$ the intersection of all seminormal submonoid of $\Z M$ which contains
$M$. We call $\+M$ the {\em seminormalization} of $M$. Then $\+M$ is contained in the normalization $\overline{M}$ of $M$ in
$\Z M$ and $\+M$ is again an affine monoid.

Hochster and Roberts \cite[Proposition 5.32]{HR} proved that an affine monoid ring $k[M]$, which is always domain, is a
seminormal ring if and only if $M$ is a seminormal monoid. L. Reid and L. Roberts proved the following formula for the
seminormalization of a monoid.
\begin{thm}[\cite{RR}, Theorem 4.3]
Let $M\in \Z^d$ be an affine monoid. Then
\[
\+M= \bigcup_{F ~ \textnormal{is face of} ~ \R_+M}\Z (M\cap F) \cap \relint F,
\]
where $\relint F$ denote the relative interior of the cone $F$.
\end{thm}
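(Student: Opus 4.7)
The plan is to prove both inclusions by designating the right-hand side as the candidate for $\+M$. Write $N := \bigcup_F \Z(M\cap F) \cap \relint F$, the union running over all faces $F$ of the cone $C := \R_+M$. First I would check that $N$ is an affine submonoid of $\Z M$ containing $M$. Every $a\in M$ lies in the relative interior of a unique face $F$ of $C$ with $a\in M\cap F$, giving $M\subseteq N$. For closure under addition, take $x\in \Z(M\cap F)\cap\relint F$ and $y\in \Z(M\cap G)\cap\relint G$, and let $H$ be the smallest face of $C$ containing $F\cup G$. Using the face property ``if $a,b\in C$ and $a+b$ lies in a face of $C$, then so do $a$ and $b$'', one checks that $x+y$ cannot belong to any proper subface of $H$, hence $x+y\in\relint H$. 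Since $F,G\subseteq H$ we have $x+y\in \Z(M\cap H)$, so $x+y\in N$.

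Next I would verify that $N$ itself is seminormal, which gives $\+M\subseteq N$ from the definition of $\+M$ as an intersection. Suppose $x\in \Z N=\Z M$ satisfies $2x,3x\in N$. Then $2x,3x\in C$, and since $C$ is a cone, $x\in C$. Let $F$ be the unique face with $x\in \relint F$; then $2x,3x\in \relint F$ as well, and both lie in $\Z(M\cap F)$. Consequently $x=3x-2x\in \Z(M\cap F)\cap\relint F\subseteq N$.

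For the reverse inclusion, let $N'\subseteq \Z M$ be any seminormal submonoid containing $M$; I must show $N\subseteq N'$. Fix $x\in \Z(M\cap F)\cap \relint F$. The key technical step is to exhibit $n_0\ge 1$ with $nx\in M\cap F$ for every $n\ge n_0$. Since $\overline{M\cap F}=F\cap \Z(M\cap F)$ is a finitely generated module over $M\cap F$ (a consequence of Gordan's lemma), there exists $c\in M\cap F$ with $c+\overline{M\cap F}\subseteq M\cap F$. Because $x\in \relint F$, one has $nx-c\in F$ for all large $n$, and since $nx-c\in \Z(M\cap F)$, this gives $nx-c\in \overline{M\cap F}$ and thus $nx=c+(nx-c)\in M\cap F$.

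Finally, with $\ell x\in M\subseteq N'$ for all $\ell\ge n_0$, I would descend by downward induction using the seminormality of $N'$. If $\ell x\in N'$ for every $\ell\ge L$ with $L\ge 2$, apply the seminormality criterion to $y=(L-1)x\in\Z N'$: the estimates $2L-2\ge L$ and $3L-3\ge L$ guarantee that $2y,3y\in N'$, whence $(L-1)x\in N'$. Iterating down to $L=2$ yields $x\in N'$, so $N\subseteq N'$, and intersecting over all such $N'$ gives $N\subseteq \+M$. The main obstacle I anticipate is the conductor step producing $n_0$, since it rests on the finite generation of the normalization as a module over the monoid, a nontrivial input from the theory of affine monoids.
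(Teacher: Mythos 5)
This theorem is cited from Reid and Roberts \cite{RR} without proof in the present paper, so there is no internal argument to compare against; I review your proof on its own merits. Your argument is correct. Writing $N$ for the right-hand side, the inclusion $\+M\subseteq N$ follows once $N$ is verified to be a seminormal submonoid of $\Z M$ containing $M$: closure under addition uses that for $x\in\relint F$, $y\in\relint G$ one has $x+y\in\relint H$ with $H$ the join of $F$ and $G$ (which in turn rests on extremality of faces), and the seminormality check uses $x=3x-2x$ together with the observation that $x,2x,3x$ all lie in $\relint F$ for the \emph{same} face $F$, so that $2x,3x\in\Z(M\cap F)$. For $N\subseteq\+M$ you correctly reduce to producing $n_0$ with $nx\in M\cap F$ for all $n\ge n_0$ when $x\in\Z(M\cap F)\cap\relint F$, and then descend by downward induction via the $x^2=y^3$ criterion, using $2(L-1)\ge L$ and $3(L-1)\ge L$ for $L\ge 2$. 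The conductor step is, as you say, the essential input, and it is sound: $M\cap F$ is itself an affine monoid (a face of an affine monoid is an affine monoid), its normalization $F\cap\Z(M\cap F)$ is module-finite over it, and $nx-c\in F$ for $n\gg 0$ because $\relint F$ is open in the linear span of $F$.
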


In particular, if $M$ is seminormal then we have $\Z M\cap \relint \R_+M \subseteq M$.

\section{Normality and seminormality of toric face rings}
\label{semi}

We will show that a normal toric face ring is nothing but a normal affine monoid ring.
First we prove a general statement about reduced graded algebra over a field.

\begin{lem}
\label{reducedgraded}
Let $R$ be a reduced $\mathbb{N}^d$-graded affine $k$-algebra which is generated by elements of non-zero degrees ($d\ge 1$). If $R$ is normal then $R$ is a domain.
 \end{lem}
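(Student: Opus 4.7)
The plan is to split the proof into two steps: first show that $R$ has no nontrivial idempotents, and then invoke the standard fact that a reduced Noetherian normal ring decomposes as a finite direct product of normal domains indexed by its minimal primes. Connectedness then forces this product to have a single factor, so $R$ is a normal domain, hence in particular a domain.

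To establish that $R$ has only the trivial idempotents $0$ and $1$, I would start by reducing the multigrading to a single grading via total degree, so that $R$ becomes $\mathbb{N}$-graded, still generated as a $k$-algebra by elements of strictly positive total degree. The first observation is then that $R_0 = k$: any monomial in the generators has total degree at least $1$, because $\mathbb{N}$ contains no nontrivial sum of positive integers equal to $0$. Now let $e \in R$ be an idempotent with homogeneous decomposition $e = e_0 + e_1 + \cdots + e_n$. Comparing degree-$0$ parts of $e^2 = e$ yields $e_0^2 = e_0$ in the field $k$, so $e_0 \in \{0,1\}$; after possibly replacing $e$ by $1-e$, I may assume $e_0 = 0$, i.e.\ $e$ lies in the irrelevant ideal $\mm = \bigoplus_{i>0} R_i$. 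Iterating $e = e^n$ gives $e \in \bigcap_{n\ge 1} \mm^n$. Since $\mm^n$ is homogeneous and generated by products of at least $n$ positive-degree generators (each of total degree $\ge 1$), no nonzero homogeneous element can lie in every $\mm^n$. Decomposing $e$ into homogeneous components then forces $e = 0$.

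For the second step, $R$ is Noetherian because it is affine over $k$; being reduced, it has finitely many minimal primes $\pp_1, \ldots, \pp_s$, and the total ring of fractions is $K(R) = \prod_{i=1}^s \mathrm{Frac}(R/\pp_i)$. Each primitive idempotent of $K(R)$ satisfies the monic integral equation $X^2 - X = 0$ over $R$, and the normality of $R$ forces it to lie in $R$. The resulting idempotent decomposition is $R \cong \prod_{i=1}^s R/\pp_i$, with each factor a normal domain. By the first step, $s = 1$, so $R$ is a normal domain. The argument is essentially routine; the only point that requires a moment of care is the derivation $R_0 = k$, which rests on the fact that $\mathbb{N}^d$ admits no nontrivial relation $\sum b_i a_i = 0$ with $a_i \in \mathbb{N}^d \setminus \{0\}$ and $b_i \in \mathbb{N}$. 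This is precisely the feature of $\mathbb{N}^d$-grading (as opposed to $\mathbb{Z}^d$-grading) that makes the idempotent argument work, and it is where the hypothesis ``generated by elements of nonzero degree'' is used.
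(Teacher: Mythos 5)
Your proof is correct and shares the same structural skeleton as the paper's: use the decomposition of a reduced normal Noetherian ring into a finite product of normal domains, then show that the $\mathbb{N}^d$-grading rules out a nontrivial decomposition. Where you differ is in how you kill the decomposition. The paper argues at the level of $\Spec R$: the minimal primes are graded, hence all contained in the irrelevant maximal ideal $\mm$, so every irreducible component passes through the closed point $\mm$ and $\Spec R$ is connected, contradicting the disconnectedness that a nontrivial product would force. You instead work directly with idempotents: coarsen to the total-degree $\mathbb{N}$-grading, observe $R_0 = k$, show any idempotent $e$ (after replacing by $1-e$) has zero degree-$0$ component and hence lies in $\bigcap_n \mm^n = 0$. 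The two arguments are equivalent in content (``$\Spec R$ connected'' is the same as ``no nontrivial idempotents''), but yours is more elementary and self-contained --- it avoids invoking that minimal primes of a graded ring are graded --- at the cost of a somewhat longer computation. You also unpack the structure theorem for normal reduced Noetherian rings via integrality of idempotents over $R$ in $K(R)$, which the paper simply cites implicitly; that's a matter of taste and either way is fine.
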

\begin{proof}
Assume that $R$ is not a domain. As $R$ is normal, it must be a non-trivial finite product of normal domains. In that case, $\Spec R$ is disconnected.
However, this is not the case.

Indeed, the minimal prime ideals of $R$ are $\mathbb{N}^d$-graded, hence contained in the $\mathbb{N}^d$-graded maximal ideal $\mm$ of $R$. So each irreducible components
of $\Spec R$ contains $\mm$. We then see that $\Spec R$ is connected. So the lemma is true.
\end{proof}

\begin{lem}\label{tfrdomain}
 If the toric face ring $k[\Mcc]$ is a domain, then $\Sigma$ is the face poset of a cone. In particular, $k[\Mcc]$ is an affine monoid ring.
\end{lem}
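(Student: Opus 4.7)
My plan is to prove the contrapositive: if $\Sigma$ has at least two maximal cones, then $k[\Mcc]$ contains two nonzero elements whose product is zero, hence is not a domain.

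Let $C_1,\ldots,C_r$ be the maximal cones of $\Sigma$, and suppose for contradiction that $r\ge 2$. The first step is to exhibit, for each $i$, a lattice point $a_i\in M_{C_i}$ lying in the relative interior of $C_i$. This is standard: since $\R_+M_{C_i}=C_i$, choosing a finite generating set $m_1,\ldots,m_k$ of $M_{C_i}$ we have $\R_+\{m_1,\ldots,m_k\}=C_i$, so the sum $a_i:=m_1+\cdots+m_k$ lies in $\relint C_i$.

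The next (and key) step is to observe that such a point $a_i$ is contained in $C_i$ and in no other cone of $\Sigma$. Indeed, if $C\in\Sigma$ also contains $a_i$, the fan axioms force $C_i\cap C$ to be a common face of both. Since $a_i$ lies in the relative interior of $C_i$, the only face of $C_i$ containing $a_i$ is $C_i$ itself, so $C_i\cap C=C_i$, i.e.\ $C_i\subseteq C$. By maximality of $C_i$ this gives $C=C_i$. Applying this to $a_1\in\relint C_1$ and $a_2\in\relint C_2$, there is no cone of $\Sigma$ containing both $a_1$ and $a_2$. By the multiplication rule on $k[\Mcc]$, this means $X^{a_1}\cdot X^{a_2}=0$. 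Both factors are nonzero basis elements of the $\Z^d$-graded decomposition of $k[\Mcc]$, so $k[\Mcc]$ is not a domain, a contradiction.

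Hence $r=1$, so $\Sigma$ coincides with the face poset $\Fc(C_1)$ of $C_1$. It remains to identify $k[\Mcc]$ with the monoid ring $k[M_{C_1}]$. Taking any generating set $\{a_1,\ldots,a_n\}$ of $|\Mcc|=M_{C_1}$, we have $\{a_{i_1},\ldots,a_{i_j}\}\subseteq C_1$ for every subset, so the squarefree part $A_{\Mcc}=0$, and $I_{\Mcc}$ reduces to $I_1$, the binomial ideal defining $k[M_{C_1}]$. Thus $k[\Mcc]=k[M_{C_1}]$, an affine monoid ring. The only subtlety I anticipate is the interior-point argument, which requires careful use of the face-lattice axioms of a fan; the rest is bookkeeping from the definition of $I_{\Mcc}$.
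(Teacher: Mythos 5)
Your proof is correct and follows essentially the same strategy as the paper: exhibit nonzero monomials whose product vanishes when $\Sigma$ has more than one maximal cone. The paper simply multiplies all nonzero generators $a_1\cdots a_n$ and notes the product is $0$; your variant multiplies two relative-interior points of distinct maximal cones, and spells out the fan-axiom argument that no cone contains both, as well as the bookkeeping identifying $k[\Mcc]$ with $k[M_{C_1}]$ when $r=1$.
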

\begin{proof}
Choose a system of generators $a_1,\ldots,a_n$ of $\Mcc$ where all the $a_i$ are non-zero. If $\Sigma$ has more than one maximal
cone then $a_1\cdots a_n=0$, which contradicts the condition that $k[\Mcc]$ is a domain.
\end{proof}

We deduce the first main result of this section.

\begin{thm}
\label{normalityisdegenerate}
Assume that $k[\Mcc]$ is normal. Then $\Sigma$ is the face poset of a cone and $k[\Mcc]$ is a normal affine monoid ring. In particular, $k[\Mcc]$ is
Cohen-Macaulay.
\end{thm}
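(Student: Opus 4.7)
The plan is to chain together the two lemmas just proved and then invoke a classical theorem of Hochster. Since a normal ring is automatically reduced, and since $k[\Mcc]$ is graded with each graded piece indexed by an element of $|\Mcc|\subseteq |\Sigma|$, the hypotheses of Lemma \ref{reducedgraded} are within reach. First I would note that $\Sigma$ is a pointed fan, so each cone $C\in\Sigma$ is pointed, and the system of generators $a_1,\ldots,a_n$ of $|\Mcc|$ can be taken to consist of non-zero elements. This ensures that $k[\Mcc]$ is graded with generators of non-zero degree and admits a unique graded maximal ideal $\mm$, so the argument of Lemma \ref{reducedgraded} applies verbatim and gives that $k[\Mcc]$ is a domain.

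Once domain-ness is established, Lemma \ref{tfrdomain} immediately yields that $\Sigma$ is the face poset of a single cone $C$, and consequently
\[
k[\Mcc]=k[M_C]
\]
is an affine monoid ring. By hypothesis this ring is normal, so $M_C$ is a normal affine monoid. The final Cohen-Macaulay assertion is then precisely the classical theorem of Hochster that every normal affine monoid ring is Cohen-Macaulay, see for instance \cite{BH}, Chapter 6.

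The only real point that needs care is matching the grading hypothesis in Lemma \ref{reducedgraded} (stated for $\mathbb{N}^d$-graded algebras) with the $\mathbb{Z}^d$-grading on $k[\Mcc]$. This is not a genuine obstacle: because $\Sigma$ is pointed, one can choose a linear form on $\R^d$ that is strictly positive on $|\Sigma|\setminus\{0\}$, which coarsens the $\mathbb{Z}^d$-grading to a positive $\mathbb{N}$-grading in which every generator has positive degree. The connectedness argument in Lemma \ref{reducedgraded} goes through with this coarser grading since all minimal primes remain homogeneous and thus sit inside the unique graded maximal ideal. So the proof reduces to a short sequence of citations, with no substantive calculation required.
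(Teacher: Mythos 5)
Your proof is correct and follows essentially the same route as the paper: apply Lemma \ref{reducedgraded} to get that the ring is a domain, then Lemma \ref{tfrdomain} to identify it as an affine monoid ring, then cite Hochster's theorem for the Cohen--Macaulay conclusion. Your extra remark about coarsening the $\Z^d$-grading to a positive $\N$-grading via a linear form positive on $|\Sigma|\setminus\{0\}$ is a valid way to reconcile the grading hypothesis of Lemma \ref{reducedgraded}, a point the paper passes over silently.
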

\begin{proof}
Notice that toric face rings are always reduced, so we are in position to apply Lemma \ref{reducedgraded} and Lemma \ref{tfrdomain}.
 Hochster's theorem (\cite[Theorem 1]{Hochster}) says that normal affine monoid rings are Cohen-Macaulay, see
\cite[Theorem 6.3.5]{BH} for a proof using local cohomology. This concludes the proof of the theorem.
\end{proof}

Next, we consider seminormality of toric face rings. We have the second main result of this section as follows.

\begin{prop}
\label{seminormal-tfr}
If $k[\Mcc]$ is a seminormal ring then for every cone $C\in \Sigma$, the affine monoid ring $k[M_C]$ is seminormal. Conversely,
if for each cone $C\in \Sigma$, the monoid $M_C$ is seminormal, then the toric face ring $k[\Mcc]$ is also seminormal.
\end{prop}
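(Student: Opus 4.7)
The plan is to treat the two implications separately, relying entirely on the structural results from Section \ref{background} plus one elementary fact about retracts. Let me assume throughout that $k[\Mcc]$ is reduced (a property mentioned in the paper) so that Definition \ref{seminormalitydefn} applies.

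For the forward direction, I would exploit the algebra retract structure noted just before Proposition \ref{inverselimit}: for each $C\in\Sigma$, the composition $k[M_C]\hookrightarrow k[\Mcc]\twoheadrightarrow k[M_C]$ is the identity of $k[M_C]$. The key lemma (routine and essentially a one-line check) is that seminormality passes to algebra retracts. Concretely, given $x,y\in k[M_C]$ with $x^2=y^3$, view the equation in $k[\Mcc]$ and use seminormality of $k[\Mcc]$ to obtain $z\in k[\Mcc]$ with $z^2=y$ and $z^3=x$. Applying the face projection $\pi\:k[\Mcc]\twoheadrightarrow k[M_C]$ and using that $\pi$ fixes $x$ and $y$, I get $\pi(z)^2=y$ and $\pi(z)^3=x$, so $\pi(z)$ is the required element of $k[M_C]$.

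For the converse, the strategy is a two-step quote: first translate seminormality of $M_C$ into seminormality of $k[M_C]$ via Hochster-Roberts \cite[Proposition 5.32]{HR}, and then assemble the pieces using the inverse-limit presentation of $k[\Mcc]$. Explicitly, Proposition \ref{inverselimit} gives $k[\Mcc]\cong \varprojlim_{C\in\Sigma} k[M_C]$, while Proposition \ref{inverselimitofseminormality} says that inverse limits of seminormal rings are seminormal; combining these yields seminormality of $k[\Mcc]$.

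I do not expect any serious obstacle: the forward direction is purely formal (retract + definition chase), and the converse is a direct assembly of the Hochster-Roberts criterion with Propositions \ref{inverselimit} and \ref{inverselimitofseminormality}. The only place where one might want to be slightly careful is ensuring that the face projection $\pi$ really fixes elements of $k[M_C]$ seen as a subalgebra, but this is immediate from the description of $\pi$ on the monomial basis $X^a$ with $a\in M_C$.
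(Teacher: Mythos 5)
Your proposal is correct and follows essentially the same route the paper takes: the retract lemma plus face projection for the forward direction, and the inverse-limit presentation together with Swan's Corollary 3.3 for the converse (with the Hochster--Roberts criterion filling in the passage from seminormality of the monoid $M_C$ to seminormality of the ring $k[M_C]$, a step the paper leaves implicit). The only difference is that the paper also sketches a second, alternative proof of the converse via Bruns--Gubeladze and Swan's theorem on Picard groups, which is not needed for correctness.
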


First we have a simple remark.
\begin{lem}
Let $R\hookrightarrow S$ be an algebra retract of reduced rings. If $S$ is seminormal then so is $R$.
\end{lem}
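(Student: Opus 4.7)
The plan is to verify Definition \ref{seminormalitydefn} for $R$ directly, by lifting a seminormality witness from $S$ and pulling it back through the retraction. Let $\pi\colon S \twoheadrightarrow R$ denote the retraction, so that $\pi$ is a ring homomorphism and $\pi|_R = \mathrm{id}_R$.

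First I would fix arbitrary elements $x, y \in R$ satisfying $x^2 = y^3$. Via the inclusion $R \hookrightarrow S$, the same relation $x^2 = y^3$ holds in $S$, and $S$ is reduced by hypothesis. Applying the seminormality of $S$ to this pair produces some $z \in S$ with $z^3 = x$ and $z^2 = y$.

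Next I would push $z$ back into $R$ by setting $w := \pi(z) \in R$. Since $\pi$ is a ring homomorphism that fixes $R$ pointwise, one computes
\[
w^3 = \pi(z)^3 = \pi(z^3) = \pi(x) = x \quad \text{and} \quad w^2 = \pi(z)^2 = \pi(z^2) = \pi(y) = y.
\]
This produces inside $R$ the element required by Definition \ref{seminormalitydefn}, so $R$ is seminormal.

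There is essentially no obstacle here: the argument is a one-step diagram chase, and the only ingredients are that $\pi$ respects multiplication and restricts to the identity on $R$. The reducedness hypothesis on $R$ is used implicitly, in that Definition \ref{seminormalitydefn} characterizes seminormality only for reduced rings.
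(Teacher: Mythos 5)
Your argument is correct and is precisely the proof given in the paper: apply seminormality in $S$ to produce the witness $z$, then push it down to $R$ via the retraction $\pi$, using that $\pi$ is a ring map fixing $R$ pointwise. Your version merely spells out the computation $w^3=x$, $w^2=y$ which the paper leaves implicit.
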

\begin{proof}
If $x^2=y^3$ in $R$, then since $S$ is seminormal, there is some $z\in S$ such that $x=z^3, y=z^2$. Apply the retracting morphism $S\to R$,
we are done.
\end{proof}

Since for each $C\in \Sigma$, we have an algebra retract $k[M_C]\hookrightarrow k[\Mcc]$, so the first statement is true.
The second statement is a consequence of Proposition \ref{inverselimit} and Proposition \ref{inverselimitofseminormality}.

Another way to see that is by using the result in the book of Bruns and Gubeladze \cite[Exercise 8.13]{BG} which says that
if for each cone $C\in \Sigma$, the monoid $M_C$ is seminormal, then all finitely generated projective modules over $k[\Mcc]$ are
free. It is an easy exercise to show that the polynomial extensions of a toric face ring are again toric face rings. Moreover,
the property: that the monoid $M_C$ is seminormal for all $C\in\Sigma$, is stable under those polynomial extensions. Thus the Picard
groups of $k[\Mcc]$ and $k[\Mcc][X]$ (where $X$ is an indeterminate) are trivial. Apply Swan's theorem \cite[Theorem 1]{Swa},
which generalized Traverso's theorem \cite[Theorem 3.6]{Tra}, we have the conclusion that $k[\Mcc]$ is seminormal.

Given a monoidal complex $\Mcc$, we can define its {\em seminormalization} complex $\+{\Mcc}$ as follows. For each $C$, let $\+{M_C}$ be the
seminormalization of $M_C$ inside $\mathbb{Z}M_C$. Then $\+{\Mcc}$ is the collection of affine monoids $\+{M_C}$ with
$C\in \Sigma$. Denote $\+R= k[\+{\Mcc}]$.
\begin{thm}[\cite{BG}, Exercise 8.13]
$\+{\Mcc}$ is a seminormal monoidal complex supported on $\Sigma$ and the natural inclusion $R\hookrightarrow \+R$ is finite. Moreover, $R$ is
seminormal if and only if $\Mcc=\+{\Mcc}$.
\end{thm}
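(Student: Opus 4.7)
The plan is to verify the three assertions separately, leaning on the Reid--Roberts description of the seminormalization and on Proposition \ref{seminormal-tfr}.

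First, to see that $\+{\Mcc}$ is a seminormal monoidal complex supported on $\Sigma$, I check the defining axioms. Each $\+{M_C}$ is sandwiched between $M_C$ and $\overline{M_C}$, and since $\overline{M_C}$ is affine by Gordan's lemma, so is $\+{M_C}$. The inclusion $\+{M_C}\subseteq C\cap\Z^d$ is clear (via $\overline{M_C}$), and $\R_+\+{M_C}=C$ follows from $M_C\subseteq\+{M_C}\subseteq C$. The nontrivial axiom is the face compatibility $\+{M_D}=\+{M_C}\cap D$ whenever $D\in\Fc(C)$, and for this I apply the Reid--Roberts formula
\[
\+{M_C}=\bigcup_{E\in\Fc(C)}\Z M_E\cap\relint E,
\]
using that the faces of the cone $C=\R_+M_C$ are precisely the elements of $\Fc(C)$, and that $M_C\cap E=M_E$ by the monoidal complex axiom. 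Intersecting with $D$ and using that $\relint E$ is disjoint from every proper face of $E$, the condition $\relint E\cap D\neq\emptyset$ forces $E\subseteq D$, i.e.\ $E\in\Fc(D)$; the union therefore collapses to $\bigcup_{E\in\Fc(D)}\Z M_E\cap\relint E$, which is $\+{M_D}$ by the same formula. Seminormality of each $\+{M_C}$ is immediate from the construction (or from the idempotence of seminormalization in the Reid--Roberts formula).

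Next, for the finiteness of $R\hookrightarrow\+R$: the embedding is induced by $k[M_C]\hookrightarrow k[\+{M_C}]$ for each maximal $C\in\Sigma$ and Proposition \ref{inverselimit}, or equivalently is the $k$-linear map $X^a\mapsto X^a$ coming from $|\Mcc|\subseteq|\+{\Mcc}|$ (well-defined on multiplication because $a,b\in M_C$ forces $a,b\in\+{M_C}$). Gordan's lemma gives that $\overline{M_C}$ is a finite $M_C$-module, so the intermediate submonoid $\+{M_C}$ is finite over $M_C$ as well. Fixing coset representatives $\{u_{C,j}\}_j$ of $\+{M_C}$ over $M_C$ for each $C$ and using that $\Sigma$ has only finitely many cones, the finite set $\{X^{u_{C,j}}\}_{C,j}$ generates $\+R$ as an $R$-module.

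Finally, for the equivalence: if $\Mcc=\+{\Mcc}$ then every $M_C$ is seminormal, so Proposition \ref{seminormal-tfr} shows $R$ is seminormal. Conversely, if $R$ is seminormal, Proposition \ref{seminormal-tfr} forces each $M_C$ to be seminormal, i.e.\ $M_C=\+{M_C}$, and hence $\Mcc=\+{\Mcc}$. The main technical step is the face-compatibility argument in the first paragraph, which is a careful but routine combination of the Reid--Roberts formula with the disjointness of $\relint E$ from the proper faces of $E$; the finiteness assertion and the equivalence are then direct consequences of results already available in the paper.
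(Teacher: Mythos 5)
Your proposal is correct and follows essentially the same route as the paper; the paper's proof is very terse (it simply says ``we can check'' that $\+{\Mcc}$ is a monoidal complex, and dispatches finiteness and the equivalence in one line from the inclusions $M_C\subseteq\+{M_C}\subseteq\overline{M_C}$), whereas you fill in the real content, most notably the face-compatibility axiom $\+{M_C}\cap D=\+{M_D}$ via the Reid--Roberts formula together with the observation that $\relint E\cap D\neq\emptyset$ forces $E\subseteq D$ for faces $E,D$ of $C$.
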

\begin{proof}
It is known that each $\+{M_C}$ is an affine monoid. We can check that $\+{\Mcc}$ is a monoidal complex supported on $\Sigma$.
That $\+{\Mcc}$ is seminormal follows from Proposition \ref{seminormal-tfr}.
Since for each $C\in \Sigma$, we have $M_C \subseteq \+M_C \subseteq \overline{M_C}$, the second and the last statement are also true.
\end{proof}

\section{Local cohomology of seminormal toric face rings}
\label{lc}

In this section, we generalize previous results of \cite{BBR}, \cite{BLR}, \cite{IR} concerning local cohomology of toric face
rings. We ask a question which amounts to a characterization of seminormal toric face rings via the vanishing of their local cohomology modules. We keep using
the notation of Section \ref{semi}. Note that $R$ is $\mathbb{Z}^d$-graded. Hence, all the local cohomology modules of $R$ are $\mathbb{Z}^d$-graded. For this reason, we will
restrict our attention to the $\mathbb{Z}^d$-graded components of $H^i_{\mm}(R), i\ge 0.$

First we recall basic constructions and facts about cell complex. Given a rational pointed fan $\Sigma \subseteq \mathbb{R}^d$, we associate a finite
regular cell complex $(X, \Gamma_{\Sigma})$ as follows. Let $X=\Sigma \cap \mathbb{S}^{d-1}$ where $\mathbb{S}^{d-1}$ is the $(d-1)$-dimensional unit sphere
in $\mathbb{R}^d$. Let $\Gamma_{\Sigma}=\{\relint (C)\cap \mathbb{S}^{d-1}: C\in \Sigma\}$.

For each $C\in \Sigma$, denote $e_C=\relint (C)\cap \mathbb{S}^{d-1}$. Each $e_C$ is an open cell. Denote $\Gamma^i_{\Sigma}=\{e\in
\Gamma_{\Sigma}: \bar{e} ~ \textnormal{homeomorphic to} ~ \mathbb{B}^i \}$, where $\mathbb{B}^i$ is the $i$-dimensional ball in $\mathbb{R}^i$. Then
$\cup_{j\le i}\Gamma^j_{\Sigma}$ is the $i$-skeleton of $\Gamma_{\Sigma}$. The dimension of $\Gamma_{\Sigma}$ is $\dim \Gamma_{\Sigma} =\max \{i:
\Gamma^i_{\Sigma} \neq \emptyset\}=\dim \Sigma -1.$ We say that $e_{C'}$ is a face of $e_C$ if $C'$ is a face of the cone $C$.

There is an incidence function $\delta(.,.)$ on pairs of cells $e_C, e_{C'}$ of $\Gamma_{\Sigma}$ with $e_C \in \Gamma^i_{\Sigma}$ and
$e_{C'} \in \Gamma^{i-1}_{\Sigma}$ for some $i\ge 0$. For each such pair of cells, $\delta(e_C, e_{C'}) \in \{0, \pm{1}\}$ and
$\delta(.,.)$ satisfies the following conditions:
\begin{enumerate}
\item $\delta(e_C, e_{C'})\neq 0$ if and only if $e_{C'}$ is a face of $e_C$;
\item $\delta(e_C,\emptyset)=1$ for each $0$-cell $e_C$;
\item if $e_{C'} \in \Gamma^{i-2}_{\Sigma}$ is a face of $e_C \in \Gamma^i_{\Sigma}$ then
\[
\delta(e_C, e_{C_1})\delta(e_{C_1},e_{C'})+\delta(e_C, e_{C_2})\delta(e_{C_2},e_{C'})=0
\]
where $e_{C_1}, e_{C_2}$ are the uniquely determined $(i-1)$-cells such that $e_{C'}$ is a face of $e_{C_i}$ and $e_{C_i}$ is face of $e_C$.
\end{enumerate}

We can now define the augmented oriented chain complex of $\Gamma_{\Sigma}$ as follows:
\[\mathcal{C}_{\pnt}(\Gamma_{\Sigma}): 0\to \mathcal{C}_{\dim \Gamma_{\Sigma}}(\Gamma_{\Sigma})\to \ldots \to \mathcal{C}_{0}(\Gamma_{\Sigma})\to
\mathcal{C}_{-1}(\Gamma_{\Sigma})\to 0
\]
where $\mathcal{C}_{i}(\Gamma_{\Sigma})=\bigoplus_{e_C\in \Gamma^i_{\Sigma}}\mathbb{Z}e_C$ for $i=0,\ldots, \dim \Gamma_{\Sigma}$, and
 $\mathcal{C}_{-1}(\Gamma_{\Sigma})=\mathbb{Z}$. The differential $\partial$ is defined on $\Gamma^i_{\Sigma}$ as follows:
\[
\partial(e_C)=\sum _{e_{C'}\in \Gamma^{i-1}_{\Sigma}}\delta(e_C,e_{C'})e_{C'}.
\]
Denote by $\widetilde{H}_i(\Gamma_{\Sigma})$ the $i$-th homology of $\mathcal{C}_{\pnt}(\Gamma_{\Sigma})$.

The local cohomology modules of toric face rings are computed by the following version of \v{C}ech comlex. For each cone $C$ of
$\Sigma$, denote by $R_C$ the localization of $R$ at the multiplicative closed set $\{X^a: a\in M_C\}$. Define the $R$-modules
$$
L^t(\Mcc)=\bigoplus _{C \in \Sigma, \ \dim C=t}R_C, ~ t=0,\ldots,\dim \Sigma,
$$
and define the differential $\partial :L^{t-1} \to L^t$ componentwise as follows: the map $\partial _{C,C'}:R_{C'} \to R_C$ is $\delta(e_C, e_{C'})
\textnormal{nat}$, where ``$\textnormal{nat}$" is the natural localization map.
The following theorem is a generalization of the computation of local cohomology of affine monoid rings \cite[Theorem 6.2.5]{BH}.
\begin{thm}[\cite{IR}, Theorem 4.2]
\label{Cech}
The complex $L^{\pnt}(\Mcc)$ defined above computes the local cohomology of an arbitrary $R$-module $G$. Hence for all $i\ge 0$, we have
$$
H^i_{\mm}(G)\cong H^i(L^{\pnt}(\Mcc) \otimes _R G).
$$
\end{thm}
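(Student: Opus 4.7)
My plan is to show that $L^\pnt(\Mcc)\otimes_R-$ agrees with the derived functor of $\Gamma_\mm(-)$ by verifying the three characterizing properties: agreement with $\Gamma_\mm$ in degree zero, production of long exact sequences from short exact sequences of modules, and vanishing on injectives in positive degrees. Universality of $R\Gamma_\mm$ then forces the claimed isomorphism for every module $G$.

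For the degree zero identification, an element $g\in L^0\otimes_R G=G$ lies in $\ker\partial$ precisely when its image in $R_\rho\otimes_R G$ vanishes for every ray $\rho$ of $\Sigma$, equivalently when for each such $\rho$ there exists a nonzero $a_\rho\in M_\rho$ with $X^{a_\rho}g=0$. To match this with $\Gamma_\mm(G)=\{g:\mm^n g=0\text{ for some }n\}$, I would use that $\mm^n$ decomposes as $\sum_{C\text{ maximal}}\mm_C^n$, where $\mm_C=(X^a:0\neq a\in M_C)$, because products of monomials from distinct maximal cones vanish in $R$. Combined with the standard fact that in each affine monoid ring $k[M_C]$ the ideal generated by the $X^{a_\rho}$ with $\rho$ a ray of $C$ has radical $\mm_C$, this yields both directions.

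The long-exact-sequence property is automatic: each $R_C$ is a localization of $R$, hence $R$-flat, so $L^\pnt(\Mcc)$ is a complex of flat modules and tensoring with a short exact sequence of modules yields a short exact sequence of complexes.

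The main obstacle is vanishing of $H^i(L^\pnt(\Mcc)\otimes_R I)$ for $i>0$ on injective modules $I$. I would argue by induction on the number of maximal cones of $\Sigma$. The base case, when $\Sigma$ is the face poset of a single cone, is the classical computation of local cohomology for affine monoid rings, \cite[Theorem 6.2.5]{BH}. For the inductive step, pick a maximal cone $C$ of $\Sigma$ and set $\Sigma'=\Sigma\setminus\{C\}$, which is still a fan because every proper face of $C$ lies in another cone of $\Sigma'$ (or is $\{0\}$). Then $\Mcc$ restricts to a monoidal complex $\Mcc'$ on $\Sigma'$, and there is a natural short exact sequence
\[
0\longrightarrow K_C\longrightarrow k[\Mcc]\longrightarrow k[\Mcc']\longrightarrow 0,
\]
where $K_C$ is spanned by the $X^a$ with $a\in M_C$ not lying in any $M_{C''}$ for a proper face $C''$ of $C$. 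The sign conventions in the incidence function $\delta$ are engineered precisely so that $L^\pnt(\Mcc)$ fits into a corresponding short exact sequence of complexes relating $L^\pnt(\Mcc')$, $L^\pnt(\Mcc)$, and the classical \v{C}ech complex on $k[M_C]$ applied to $K_C$. Tensoring with $I$, the resulting long exact sequence combines the induction hypothesis on $\Mcc'$ with the base case on $k[M_C]$, and a routine diagram chase closes the induction. The hard technical point is verifying the compatibility of $\partial$ with this decomposition; once that is in place, the theorem follows.
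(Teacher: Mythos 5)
This theorem is quoted from Ichim and R\"omer \cite[Theorem 4.2]{IR}; the present paper gives no proof of its own, so I can only assess your argument on its merits rather than compare it with one in the text.

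Your general strategy (universal $\delta$-functor, agreement with $\Gamma_\mm$ in degree zero, flatness of the terms, vanishing on injectives) is the right kind of argument, and the degree-zero identification is correct once one supplies the conductor argument showing that in each $k[M_C]$ the ideal generated by the $X^{a_\rho}$ over the extreme rays $\rho$ of $C$ has radical $\mm_C$ (this is where positivity of $M_C$ and Gordan-type finiteness are used). The short exact sequence of complexes you invoke does exist: with $\qq=K_C=\ker(R\to R')$ one has, for every $D\in\Sigma$, the exact sequence $0\to(K_C)_D\to R_D\to R'_D\to 0$ by exactness of localization, and $(K_C)_D=0$ when $D$ is not a face of $C$, so assembling gives $0\to L^{\pnt}(\Fc(C))\otimes_{k[M_C]}K_C\to L^{\pnt}(\Mcc)\to L^{\pnt}(\Mcc')\to 0$, compatible with the differentials.

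The genuine gap is in what you do with this sequence. After tensoring with an injective $R$-module $I$ you get, on the two outer terms, $L^{\pnt}(\Fc(C))\otimes_{k[M_C]}\bigl(K_C\otimes_R I\bigr)$ and $L^{\pnt}(\Mcc')\otimes_{R'}\bigl(I/K_C I\bigr)$. But $K_C\otimes_R I$ is not an injective $k[M_C]$-module and $I/K_C I$ is not an injective $R'$-module (neither $K_C\otimes_R-$ nor $R'\otimes_R-$ preserves injectivity), so neither the base case \cite[Theorem 6.2.5]{BH} nor your inductive hypothesis, both of which assert vanishing only on injectives, applies to those terms. Upgrading the inductive hypothesis to ``the theorem holds for all modules over the smaller rings'' does not close the loop either, because the long exact sequence you obtain then involves $H^i_{\mm}(K_C\otimes_R G)$ and $H^i_{\mm}(G/K_C G)$, whereas the local cohomology of $G$ fits into a sequence built from $H^i_\mm(K_CG)$, and $K_C\otimes_R G\to K_CG$ need not be an isomorphism. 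A correct proof of effaceability here has to argue differently, for instance by using the structure of $\mathbb{Z}^d$-graded injectives $\bigoplus E(R/\pp)$ and showing directly that for each graded prime $\pp\neq\mm$ the complex $L^{\pnt}(\Mcc)\otimes E(R/\pp)$ is exact, or by comparing $L^{\pnt}(\Mcc)$ with the ordinary \v{C}ech complex on the generators $X^{a_1},\dots,X^{a_n}$.
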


We will also need the Mayer-Vietoris sequence for local cohomology of toric face rings. Recall that for a subfan $\Sigma '$ of $\Sigma$,
 we have the induced monoidal complex $\Mcc_{\Sigma'}$ and an induced toric face ring $R_{\Sigma'}=k[\Mcc_{\Sigma'}]$. (This is not
to be confused with the localization $R_C$ described above.)
There's a natural surjection  $R=R_{\Sigma} \to R_{\Sigma'}$ mapping every homogeneous elements outside $\Sigma'$ to zero, which preserves
the graded maximal ideals. Hence the local cohomology modules of $R_{\Sigma'}$ are the same when we consider it as an $R$-module.

Moreover, if $\Sigma = \Sigma_1\cup \Sigma_2$ for two subfans $\Sigma_1, \Sigma_2$ then we have a short exact sequence of $R$-modules:
$$
0\to R\to R_{\Sigma_1}\oplus R_{\Sigma_2}\to R_{\Sigma_1\cap\Sigma_2}\to 0.
$$
This gives rise to the following result, which might be called the Mayer-Vietoris sequence of local cohomology.

\begin{thm}[\cite{IR}, Proposition 4.3]
Let $\Mcc$ be a monoidal complex supported by a rational pointed fan $\Sigma$ in $\mathbb{R}^d$. Suppose that $\Sigma$ is the union of
two subfans, $\Sigma = \Sigma_1\cup \Sigma_2$. Then there is an exact sequence of $\mathbb{Z}^d$-graded $R$-modules
$$
\ldots\to H^{i-1}_{\mm}(R_{\Sigma_1\cap\Sigma_2})\to H^i_{\mm}(R)\to H^i_{\mm}(R_{\Sigma_1})\oplus
H^i_{\mm}(R_{\Sigma_2})\to H^{i}_{\mm}(R_{\Sigma_1\cap\Sigma_2})\to \ldots
$$
\end{thm}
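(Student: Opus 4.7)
The plan is to apply the long exact sequence of local cohomology to the short exact sequence
\[
0 \to R \to R_{\Sigma_1} \oplus R_{\Sigma_2} \to R_{\Sigma_1 \cap \Sigma_2} \to 0
\]
asserted just before the theorem; given this, the connecting homomorphisms of $H^{i}_{\mm}(-)$ furnish the Mayer--Vietoris sequence at once. Here I would use the already-recorded fact that the natural surjection $R \twoheadrightarrow R_{\Sigma'}$ carries the graded maximal ideal $\mm$ onto the graded maximal ideal of $R_{\Sigma'}$, so that $H^i_{\mm}(R_{\Sigma'})$ computed as an $R$-module agrees with the intrinsic local cohomology over $R_{\Sigma'}$.

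The substantive step is therefore to verify exactness of the short exact sequence. I would define the first arrow as the diagonal $r \mapsto (\bar r^{(1)}, \bar r^{(2)})$, where $\bar r^{(j)}$ is the image of $r$ under the natural projection $R \twoheadrightarrow R_{\Sigma_j}$, and the second as the difference $(s_1, s_2) \mapsto \overline{s_1} - \overline{s_2}$ in $R_{\Sigma_1 \cap \Sigma_2}$. Since all maps are $\mathbb{Z}^d$-graded, I would check exactness degree by degree. This requires the two combinatorial identities
\[
|\Mcc| = |\Mcc_{\Sigma_1}| \cup |\Mcc_{\Sigma_2}|, \qquad |\Mcc_{\Sigma_1}| \cap |\Mcc_{\Sigma_2}| = |\Mcc_{\Sigma_1 \cap \Sigma_2}|.
\]
The first is immediate from $\Sigma = \Sigma_1 \cup \Sigma_2$. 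For the second, if $a \in M_C \cap M_{C'}$ with $C \in \Sigma_1$ and $C' \in \Sigma_2$, then by the compatibility axiom of a monoidal complex $a \in M_{C \cap C'}$; and $C \cap C'$ is a common face of $C$ and $C'$, hence lies in $\Sigma_1 \cap \Sigma_2$.

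With these identities in hand, each graded component of the sequence falls into one of three cases: all four modules vanish at $a$; exactly one of $|\Mcc_{\Sigma_j}|$ contains $a$, in which case the sequence becomes $0 \to k \to k \to 0$ with the obvious isomorphism; or $a \in |\Mcc_{\Sigma_1 \cap \Sigma_2}|$, in which case the sequence becomes $0 \to k \to k \oplus k \to k \to 0$ with diagonal and difference maps, manifestly exact. I do not foresee a genuine obstacle; the only care point is the identity $|\Mcc_{\Sigma_1}| \cap |\Mcc_{\Sigma_2}| = |\Mcc_{\Sigma_1 \cap \Sigma_2}|$, which rests on the fan axiom that the intersection of two cones in a fan is a common face.
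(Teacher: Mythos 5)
Your proposal is correct and follows the same route the paper intends: it applies the long exact sequence of local cohomology to the short exact sequence $0 \to R \to R_{\Sigma_1} \oplus R_{\Sigma_2} \to R_{\Sigma_1\cap\Sigma_2} \to 0$ asserted just before the theorem, and your degree-by-degree verification of that short exact sequence (via $|\Mcc_{\Sigma_1}| \cap |\Mcc_{\Sigma_2}| = |\Mcc_{\Sigma_1\cap\Sigma_2}|$, which rests on the fan axiom that $C \cap C'$ is a common face) fills in exactly the detail the paper leaves implicit.
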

Now we have the following theorem concerning the vanishing of local cohomology of seminormal toric face rings, which extends \cite[Theorem 4.3]{BLR} and \cite[Proposition 4.4]{IR}.
\begin{thm}
\label{vanishing}
Let $\Sigma$ be a rational pointed fan in $\mathbb{R}^d$ (where $d\ge 1$), $\Mcc$ be a seminormal monoidal complex supported on $\Sigma$ and $R=k[\Mcc]$.
 Assume that $H^i_{\mm}(R)_a \neq 0$ for some $a\in \mathbb{Z}^d$. Then $a\in -\overline{M_C}$ for a cone $C\in \Sigma$ of dimension $\le i$. In particular,
$$
H^i_{\mm}(R)_a = 0 ~ \textnormal{if} ~ a\notin -|\overline{\mathcal M}|=\cup _{C\in \Sigma}(-\overline{M_C}).
$$
\end{thm}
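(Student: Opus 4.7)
The plan is to induct on the total number of cones of $\Sigma$, using the Mayer--Vietoris long exact sequence recalled just before the theorem statement, with the base case handled by the BLR vanishing theorem \cite[Theorem 4.3]{BLR} for seminormal affine monoid rings. The key observation is that each of the three subfans appearing in a Mayer--Vietoris decomposition inherits seminormality of its induced monoidal complex for free, since the monoids $M_C$ are unchanged.

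For the base case, suppose $\Sigma = \Fc(C)$ consists of a single cone and its faces. Then $R = k[M_C]$ is a seminormal affine monoid ring by Proposition \ref{seminormal-tfr}, and \cite[Theorem 4.3]{BLR} states precisely that $H^i_{\mm}(k[M_C])_a \neq 0$ forces $a \in -\overline{M_D}$ for some face $D$ of $C$ with $\dim D \le i$. Since the faces of $C$ are exactly the cones of $\Sigma = \Fc(C)$, and the monoidal complex axiom gives $M_C \cap D = M_D$, this is the desired conclusion.

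For the inductive step, assume $\Sigma$ has at least two maximal cones. Pick a maximal cone $C_r$ and set $\Sigma_1 = \Fc(C_r)$ and $\Sigma_2$ equal to the subfan generated by the remaining maximal cones $C_1,\ldots,C_{r-1}$. Then $\Sigma = \Sigma_1 \cup \Sigma_2$, and every cone of $\Sigma_1 \cap \Sigma_2$ is a proper face of $C_r$, hence has strictly smaller dimension. In particular $\Sigma_1$, $\Sigma_2$, and $\Sigma_1 \cap \Sigma_2$ each have strictly fewer cones than $\Sigma$, and all three carry seminormal induced monoidal complexes, so the inductive hypothesis applies to each. From the Mayer--Vietoris sequence in internal degree $a$,
$$H^{i-1}_{\mm}(R_{\Sigma_1 \cap \Sigma_2})_a \to H^i_{\mm}(R)_a \to H^i_{\mm}(R_{\Sigma_1})_a \oplus H^i_{\mm}(R_{\Sigma_2})_a,$$
non-vanishing of the middle term forces non-vanishing of a flanking term. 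Applying the hypothesis to $R_{\Sigma_j}$ (for $j=1,2$) produces a cone $C \in \Sigma_j \subseteq \Sigma$ with $\dim C \le i$ and $a \in -\overline{M_C}$; applying it to $R_{\Sigma_1 \cap \Sigma_2}$ in cohomological degree $i-1$ produces a cone $C \in \Sigma_1 \cap \Sigma_2 \subseteq \Sigma$ with $\dim C \le i-1 \le i$ and the same conclusion.

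The main subtlety, rather than obstacle, lies in matching the BLR formulation (faces of the cone $\mathbb{R}_+ M$) to the fan formulation used here, together with checking the compatibility of the normalizations $\overline{M_C}$ with the face relations; both are immediate from the monoidal complex axioms. The ``In particular'' clause follows instantly, since $-\overline{M_C} \subseteq -|\overline{\Mcc}|$ for every $C \in \Sigma$, so any $a \notin -|\overline{\Mcc}|$ must have $H^i_{\mm}(R)_a = 0$ for all $i \ge 0$.
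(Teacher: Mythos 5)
Your proof is correct and rests on the same two pillars as the paper's: the Mayer--Vietoris long exact sequence from \cite[Proposition 4.3]{IR} and the vanishing theorem \cite[Theorem 4.3]{BLR} for seminormal affine monoid rings. The differences lie in the bookkeeping. The paper picks a cone $C$ of maximal dimension $\dim\Sigma$, sets $\Sigma_1 = \Sigma \setminus \{C\}$ and $\Sigma_2 = \Fc(C)$, and then runs a triple induction on $i$, on $\dim\Sigma$, and on the number of cones of maximal dimension; in particular the BLR input is only applied to the single summand $R_{\Sigma_2}\cong k[M_C]$ at each stage. You instead split along a maximal cone $C_r$ as $\Sigma_1 = \Fc(C_r)$ and $\Sigma_2$ = the subfan generated by the remaining maximal cones, observe that $\Sigma_1$, $\Sigma_2$ and $\Sigma_1\cap\Sigma_2$ each have strictly fewer cones than $\Sigma$ (since $\Sigma$ has $\ge 2$ maximal cones), and run a single induction on the total number of cones with the BLR theorem as the genuine base case $\Sigma=\Fc(C)$. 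This single-parameter induction is tidier. One point you gloss over: \cite[Theorem 4.3]{BLR} is stated for $a\in\Z M_C$, and the paper explicitly separates the case $a\notin\Z M_C$, which is disposed of by noting that $H^i_{\mm}(k[M_C])$ is $\Z M_C$-graded. Your contrapositive phrasing (``non-vanishing forces $a\in-\overline{M_D}$'') is fine because $H^i_{\mm}(k[M_C])_a\neq 0$ already implies $a\in\Z M_C$, but it would be cleaner to make this remark explicit rather than leave it implicit in the citation.
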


\begin{proof}
If $i=0$, since $R$ is reduced, we have $H^0_{\mm}(R)= 0$ and thus there's nothing to do. Assume that $i>0$ and
 $a\notin -\overline{M_D}$ for any cone $D\in \Sigma$ of dimension $\le i$. We will prove that $H^i_{\mm}(R)_a = 0.$

If $\dim \Sigma=0$ then $R\cong k$ and the claim is clearly true. Assume that $\dim \Sigma >0$. Let $C\in \Sigma$ be a cone of maximal dimension
$\dim C= \dim \Sigma$.

Let $\Sigma_1=\Sigma -{C}$ and $\Sigma_2= \Fc(C)$ be the face poset of $C$. We have the Mayer-Vietoris sequence:
$$
\ldots\to H^{i-1}_{\mm}(R_{\Sigma_1\cap\Sigma_2})_a\to H^i_{\mm}(R)_a\to H^i_{\mm}(R_{\Sigma_1})_a\oplus
H^i_{\mm}(R_{\Sigma_2})_a\to \ldots
$$
Note that $R_{\Sigma_2} \cong k[M_C]$. If $a\notin \mathbb{Z}M_C$, we have $H^i_{\mm}(R_{\Sigma_2})_a=0$, since
$H^i_{\mm}(k[M_C])$ is $\mathbb{Z}M_C$-graded. Assume that $a\in \mathbb{Z}M_C$. Since $M_C$ is seminormal and
$a \notin -\overline{M_D}$ for any face $D$ with dimension $\le i$ of $C$, Theorem 4.3 in \cite{BLR} implies that
$H^i_{\mm}(R_{\Sigma_2})_a=0$. Moreover, $a\notin -\overline{M_D}$ if $D$ is either a cone of dimension $\le i$ of $\Sigma_1$
or a cone of dimension $\le i-1$ of $|\Sigma_1\cap\Sigma_2|$. Thus by setting up an
 induction on $i$, another on the dimension and yet another induction on the number of cones of maximal dimension of a fan, we may assume that
$H^{i-1}_{\mm}(R_{\Sigma_1\cap\Sigma_2})_a=0$ and $H^i_{\mm}(R_{\Sigma_1})_a=0$. From the long exact sequence we see that $H^i_{\mm}(R)_a = 0$,
as claimed.
\end{proof}

Next we present a computation of local cohomology of seminormal toric face rings in combinatorial terms, that is, via homology of
certain cell complexes. We give an application of this computation, namely to deduce Brun, Bruns and R\"omer's generalized version of Hochster's formula
 for local cohomology of Stanley-Reisner rings.

\begin{defn}
\label{star}
 For each $a\in \mathbb{Z}^d$, denote by $\st _{\Sigma}(a)=\{D\in \Sigma: a \in \overline{M_D}\}$
 and $\Sigma(a)=\Sigma \setminus \st_{\Sigma}(a)$, which is a subfan of $\Sigma$. Denote by $\mathcal{C}_{\pnt}(\Gamma_{\st_{\Sigma}(a)})$ the
complex $\mathcal{C}_{\pnt}(\Gamma_{\Sigma})/\mathcal{C}_{\pnt}(\Gamma_{\Sigma(a)}).$ The complex
$\Hom _{\mathbb Z}(\mathcal{C}_{\pnt}(\Gamma_{\st_{\Sigma}(a)}),k)$ is denoted by $\mathcal{C}^{\pnt}(\Gamma_{\st_{\Sigma}(a)}).$ Denote
the corresponding homology and cohomology of the above complexes by $\widetilde{H}_i(\Gamma_{\st_{\Sigma}(a)})$ and
$\widetilde{H}^i(\Gamma_{\st_{\Sigma}(a)})$, respectively.
\end{defn}

In the following, as usual, given a complex $\mathcal{C}_{\pnt}$ the notation $\mathcal{C}_{\pnt}[m]$ denotes the complex
$\mathcal{C}_{\pnt}$ right-shifted by $m$ positions, so $\mathcal{C}_{i}[m]=\mathcal{C}_{i+m}.$ Theorem \ref{lc-combin} generalizes
Theorem 4.5 in \cite{IR}.
\begin{thm}
\label{lc-combin}
Let $\mathcal M$ be a monoidal complex supported on the rational pointed fan $\Sigma$ in $\mathbb{R}^d$. Let $a\in \mathbb{Z}^d$. Then
\[
L^{\pnt}(\mathcal M)_a\cong L^{\pnt}(\Mcc_{\Sigma(-a)})_a\oplus
\Hom _{\mathbb Z}(\mathcal{C}_{\pnt}(\Gamma_{\st _{\Sigma}(-a)})[-1],k)\otimes _{k}k(-a)
\]
as complexes of $\mathbb{Z}^d$-graded $k$-vector spaces. Hence for all $i\ge 0$ we have an isomorphism of graded $k$-vector spaces:
\[
H^i_{\mm}(R)_a \cong H^i_{\mm}(R_{\Sigma(-a)})_a \oplus \widetilde{H}^{i-1}(\Gamma_{\st _{\Sigma}(-a)})\otimes_{k}k(-a).
\]

If in addition, $\mathcal M$ is seminormal then for each $i\ge 0$,
\[
H^i_{\mm}(R)_a \cong \widetilde{H}^{i-1}(\Gamma_{\st _{\Sigma}(-a)})\otimes_{k}k(-a).
\]
\end{thm}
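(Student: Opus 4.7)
The strategy is to generalize the argument of \cite[Theorem 4.5]{IR}, which handles the Stanley case $M_C=C\cap \Z^d$, by directly analyzing the degree-$a$ component of each localization $R_C$ appearing in the \v{C}ech-type complex $L^{\pnt}(\Mcc)$ of Theorem \ref{Cech}. Once the first (complex-level) statement is established, the second follows by taking $H^i$ termwise, and the third is immediate: the submonoidal complex $\Mcc_{\Sigma(-a)}$ is seminormal because $\Mcc$ is, and by definition no cone $C\in \Sigma(-a)$ satisfies $-a\in \overline{M_C}$, so $a\notin -|\overline{\Mcc_{\Sigma(-a)}}|$ and Theorem \ref{vanishing} gives $H^i_{\mm}(R_{\Sigma(-a)})_a=0$ for every $i$.

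The core step is a dichotomy for the piece $(R_C)_a$ attached to each cone $C\in \Sigma$. Representing elements of $R_C$ as fractions $X^u/X^v$ with $v\in M_C$ and $u\in|\Mcc|$ that survive the localization (which forces the smallest cone containing $u$ to share a common super-cone with $C$ in $\Sigma$), one establishes:
\begin{enumerate}
\item if $-a\in \overline{M_C}$, so $C\in \st_{\Sigma}(-a)$, then some positive multiple $-pa$ lies in $M_C$, and a direct computation shows $(R_C)_a$ is one-dimensional over $k$ with a canonical generator;
\item if $-a\notin \overline{M_C}$, so $C\in \Sigma(-a)$, then $(R_C)_a$ agrees naturally with the degree-$a$ component of the corresponding localization of the sub-toric-face-ring $R_{\Sigma(-a)}$ at $M_C$.
\end{enumerate}
Assembling cone by cone, in each cohomological degree $t$ one obtains a direct sum of a piece indexed by $\{C\in \Sigma(-a): \dim C=t\}$ that reproduces $L^t(\Mcc_{\Sigma(-a)})_a$, and a piece indexed by $\{C\in \st_{\Sigma}(-a): \dim C=t\}$ contributing one copy of $k$ per cone; the latter matches $\Hom_{\Z}(\Cc_{t-1}(\Gamma_{\st_{\Sigma}(-a)}),k)$ via the convention $\dim e_C=\dim C-1$, which accounts for the shift $[-1]$ in the statement.

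The main obstacle is promoting this graded-vector-space decomposition to an isomorphism of complexes, i.e.\ verifying compatibility of the differential. The $\Sigma(-a)$-part is handled tautologically by (ii); on the $\st_{\Sigma}(-a)$-part, the natural localization map between one-dimensional pieces is $\pm 1$ scaled by the incidence sign $\delta(e_C,e_{C'})$, which is exactly the cellular coboundary. The subtle case is the mixed differential from $(R_{C'})_a$ with $C'\in \Sigma(-a)$ to $(R_C)_a$ with $C\in \st_{\Sigma}(-a)$ and $C'$ a facet of $C$; one must show that this map vanishes. This reduces to checking that no fraction $X^u/X^v$ representing a nonzero class in $(R_{C'})_a$ can land on the canonical generator of $(R_C)_a$, since otherwise rearranging the fraction would exhibit $-a$ as an element of $\overline{M_{C'}}$, contradicting $C'\in \Sigma(-a)$. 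Once this vanishing is verified, the claimed splitting is an isomorphism of complexes, and the remaining assertions of the theorem follow as described above.
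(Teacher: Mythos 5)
Your proposal follows essentially the same route as the paper: the cone-by-cone dichotomy (which the paper isolates as Lemma \ref{iso-cone}), the one-dimensional contribution $(R_C)_a\cong k$ for $C\in\st_{\Sigma}(-a)$, the identification over $\Sigma(-a)$ with the subcomplex $\Mcc_{\Sigma(-a)}$, the three-case verification of the differentials, and the deduction of the seminormal case from Theorem \ref{vanishing} all match the paper's argument. The one caveat is that the mixed-case vanishing you rightly single out as the main obstacle is compressed into a single clause, whereas the paper's Lemma \ref{iso-cone}(ii) devotes a genuinely intricate argument to it: one writes $-a=u-w$ with $u,w\in M_C$, shows $u\notin C'$ using the face property of cones, and then proves $X^uX^z=0$ in $R$ by showing $u$ and $z$ lie in no common $M_D$, which requires two further applications of the face property together with the fact that intersections of cones in a fan are common faces; so ``rearranging the fraction'' is the right intuition for the eventual contradiction $-a\in\overline{M_{C'}}$, but you would need to supply that combinatorial chain to make the step rigorous.
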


First we prove an auxiliary result.
\begin{lem}
\label{iso-cone}
We have the following:
\begin{enumerate}
 \item If $C\in \Sigma(-a)$ then $(R_C)_a \cong (k[{\mathcal M}_{\Sigma(-a)}]_C)_a$.

\item If $C'\subseteq C$ are cones of $\Sigma$ such that $C'\in \Sigma(-a), C\in \st_{\Sigma}(-a)$ then the natural map $(R_{C'})_a \to (R_C)_a$ is zero.
\end{enumerate}
\end{lem}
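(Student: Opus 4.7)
The plan is to derive both parts from one clean criterion describing when a monomial survives the localization $R_C$, together with one geometric lemma about faces of cones that controls membership in $\overline{M_C}$.

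First I would establish a \emph{nonvanishing criterion}: for $u\in|\Mcc|$ and $C\in\Sigma$, the monomial $X^u$ is nonzero in $R_C$ if and only if $u\in M_F$ for some $F\in\Sigma$ having $C$ as a face. For the nontrivial direction, choose $w\in M_C\cap\relint C$ (for instance the sum of a set of generators of $M_C$). Then $X^u\cdot X^w\ne 0$ iff some cone contains both $u$ and $w$; but any cone containing $w$ must contain $C$ as a face, by the fan axiom. Conversely, if $u\in M_F$ for some $F\supseteq C$, then $w\in M_C\subseteq M_F$ and $X^u X^w=X^{u+w}\ne 0$.

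Next I would prove the \emph{face-descent lemma}: if $G$ is a face of $F\in\Sigma$ and $a\in\Z^d$ can be written as $a=u-v$ with $u\in M_F$, $v\in M_G$, and moreover $-a\in\overline{M_F}$, then $-a\in\overline{M_G}$. Pick $p\ge 1$ with $-pa\in M_F$. Let $\ell$ be a supporting linear functional of $F$ with $\ell\ge 0$ on $F$ and $G=F\cap\{\ell=0\}$. Then $\ell(-pa)=p\ell(v)-p\ell(u)=-p\ell(u)\le 0$, while $-pa\in F$ forces $\ell(-pa)\ge 0$. Hence $\ell(-pa)=0$, so $-pa\in G$, and therefore $-pa\in M_F\cap G=M_G$ by the face axiom of a monoidal complex.

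For part (i), the natural surjection $\pi\colon R\twoheadrightarrow k[\Mcc_{\Sigma(-a)}]$ induces a surjection on degree-$a$ localizations, and I only need injectivity. Take a nonzero element $X^u/X^v\in (R_C)_a$ with $u=a+v$ and $v\in M_C$; by the nonvanishing criterion, $u\in M_E$ for some $E\supseteq C$. If $-a\in\overline{M_E}$, applying face-descent with $G=C$ and $F=E$ yields $-a\in\overline{M_C}$, contradicting $C\in\Sigma(-a)$. Thus $E\in\Sigma(-a)$, and $X^u/X^v$ remains nonzero after $\pi$.

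For part (ii), take $X^u/X^v\in (R_{C'})_a$ and suppose its image in $(R_C)_a$ is nonzero. By the nonvanishing criterion there is $F\supseteq C\supseteq C'$ with $u\in M_F$, and since $C\in\st_{\Sigma}(-a)$ some $p\ge 1$ gives $-pa\in M_C\subseteq M_F$, so $-a\in\overline{M_F}$. Face-descent applied to $C'$ as a face of $F$, with the same witnesses $u$ and $v$, then yields $-a\in\overline{M_{C'}}$, contradicting $C'\in\Sigma(-a)$; hence the image must be zero. The main obstacle is pinning down the nonvanishing criterion cleanly, since localizing a toric face ring can annihilate monomials coming from cones that do not share a suitable superface with $C$. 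Once that is in hand, the rest is a single supporting-hyperplane computation applied twice.
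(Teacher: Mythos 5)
Your approach genuinely differs from the paper's: you package the argument into a nonvanishing criterion for monomials in the localization $R_C$ and a ``face-descent'' lemma, whereas the paper works directly from the short exact sequence $0 \to \qq_{\Sigma(-a)} \to R \to k[\Mcc_{\Sigma(-a)}] \to 0$ for part (i) and, for part (ii), a cleverly chosen decomposition $-a = u - w$ with $u,w \in M_C$ from which one shows $X^u X^z = 0$ in $R$. Both arguments rest on the same geometric fact about faces of cones (if $x,y$ lie in a cone $F$ and $x+y$ lies in a face $G$ of $F$, then $x,y \in G$); your supporting-hyperplane computation in the face-descent lemma is precisely that fact written out. Your formulation is more modular and makes the localization mechanism explicit via the interior test element $w \in M_C \cap \relint C$, at the cost of proving two auxiliary statements; the paper's argument is shorter but more ad hoc.

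There is one gap to close in the face-descent lemma. Its conclusion $-a \in \overline{M_G}$ requires both $-a \in G$ and $-a \in \Z M_G$, since $\overline{M_G} = \Z M_G \cap G$. Your proof explicitly shows only $-pa \in M_G$; this gives $-a \in G$ (cones are closed under positive scaling) but says nothing directly about $-a \in \Z M_G$, and in general $-pa \in \Z M_G$ does \emph{not} force $-a \in \Z M_G$. Fortunately, your own computation already supplies the missing piece: from $\ell(-pa) = -p\ell(u) = 0$ you obtain $\ell(u) = 0$, hence $u \in F \cap \{\ell = 0\} = G$, hence $u \in M_F \cap G = M_G$ by the monoidal-complex axiom. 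Then $-a = v - u$ with $u,v \in M_G$, so $-a \in \Z M_G$, and the lemma follows. Once that observation is added (and the trivial case $G = F$ is noted), the nonvanishing criterion and both applications in parts (i) and (ii) go through correctly.
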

\begin{proof}
(i) Note that we have a short exact sequence of graded $R$-modules:
$$
0\to \qq _{\Sigma(-a)}\to R \to k[\Mcc_{\Sigma(-a)}]\to 0.
$$
After localizing, we are left with proving that $({\qq _{\Sigma(-a)}}_C)_a=0.$
Assume the contrary, so there exists $0\neq X^z/X^y\in {\qq _{\Sigma(-a)}}_C$ with $z\notin |\Mcc_{\Sigma(-a)}|$ and $y\in M_C$ such that $z-y=a$.

Since $X^z/X^y \neq 0$, there's a cone $D$ for which $M_D$ contains $z$ and $y$. Since $z\in M_D$ and $z\notin |\Mcc_{\Sigma(-a)}|$,
we must have $D\in \st_{\Sigma(-a)}$, and thus $-a\in \overline{M_D}$.

Since $y\in M_{C\cap D}$, replace $C$ by $C\cap D$ if necessary, we can assume that $C\subseteq D$. Now $-a, z$ are elements of $D$ such that
$-a +z=y$ and $y\in M_C$. But $C$ is a face of $D$, so we have $z\in C\cap M_D=M_C$. But then $z\in |\Mcc_{\Sigma(-a)}|$, contradiction.

(ii)  Clearly $-a\in C$. Since $-a\in \overline{M_C}=\Z M_C\cap C$, we can write $-a=u-w$ where $u,w\in M_C$. First note that
$u\notin C'$, otherwise $(-a)+w\in C'$, hence $-a\in C'$ and $u,w\in C'\cap M_C= M_{C'}$. Therefore $-a\in \Z M_{C'}\cap C'=\overline{M_{C'}}$, a contradiction.

Assume that $X^z/X^y \in R_{C'}$ with $z\in |\Mcc|,y\in M_{C'}$ and $z-y=a$. We will prove that $X^uX^z=0$ in $R$. Then since $X^u$ is a unit in $R_C$, we have $X^z/X^y=0$ in $(R_C)_a$.

It is enough to show that there is no cone $D\in \Sigma$ such that $z,u\in M_D$. Assume that there is such a cone. Since
$z-y=a$, we have $y+w=z+u\in D$. But $y,w\in C$ so both of them are in $C\cap D$. The same argument implies that $z,u\in C\cap D$. Now $(-a)+z=y$ and $-a,z\in C, y\in C'\subseteq C$, so $-a,z\in C'$. But then $z\in M_{C'}$, therefore $-a=y-z\in \Z M_{C'}\cap C'=\overline{M_{C'}},$ a contradiction. The proof of the lemma is completed.
\end{proof}

\begin{proof}[Proof of Theorem \ref{lc-combin}]
We notice that it is enough to prove the first isomorphism. The last statement follows easily from the first isomorphism and Theorem
\ref{vanishing} since in this case $\Mcc_{\Sigma(-a)}$ is seminormal and $a \notin -|\overline{\Mcc_{\Sigma(-a)}}|$. Thus
 $H^i_{\mm}(R_{\Sigma(-a)})_a=0$ for all $a$.

If $\st_{\Sigma}(-a)=\emptyset$ then the isomorphism is trivial. Thus in the following we assume that $\st_{\Sigma}(-a)\neq \emptyset$. We have
$$
L^i(\mathcal M)_a = \bigoplus _{C\in \Sigma(-a), ~ \dim C=i} (R_C)_a  \oplus \bigoplus _{D\in \st_{\Sigma}(-a), ~ \dim D=i} (R_D)_a.
$$
For each $D\in \st_{\Sigma}(-a)$, we have $-a \in M_D$. Thus $X^{-a}$ is a non-zero element of $(R_D)_a$, hence $(R_D)_a\cong k$. We get
$$
\bigoplus _{D\in \st_{\Sigma}(-a), ~ \dim D=i} (R_D)_a\cong
\Hom _{\mathbb Z}(\mathcal{C}_{i-1}(\Gamma_{\st _{\Sigma}(-a)}),k)\otimes _{k}k(-a),
$$
as $k$-vector spaces. Combine with Lemma \ref{iso-cone}, the first isomorphism is proved on the module level.

Next, we prove the isomorphism on the graded complex level and thus finish the proof of the theorem. Consider $C'\subseteq C$ with $C'\in
\Gamma^{i}_{\Sigma}, C\in \Gamma^{i+1}_{\Sigma}$. There are three cases to consider.

\smallskip

\noindent {\em Case 1:} If both $C'$ and $C$ belong to $\in \Sigma(-a)$. We can check that the following diagram, with two
vertical maps being isomorphisms, is commutative.
\begin{displaymath}
    \xymatrix{ (R_{C'})_a \ar[r] \ar[d] & (R_C)_a \ar[d] \\
               (k[{\mathcal M}_{\Sigma(-a)}]_{C'})_a  \ar[r]  & (k[{\mathcal M}_{\Sigma(-a)}]_C)_a}
\end{displaymath}

\smallskip

\noindent {\em Case 2:} If $C'$ belongs to $\st_{\Sigma}(-a)$, then so does $C$. It is easy to check that the following diagram is commutative.
\begin{displaymath}
    \xymatrix{ (R_{C'})_a \ar[r] \ar[d] & (R_C)_a \ar[d] \\
               \Hom _{\Z}(\mathcal{C}_{i-1}(\Gamma_{\st _{\Sigma}(-a)}),k)\otimes_{k}k(-a)  \ar[r]  & \Hom _{\Z}(\mathcal{C}_{i}(\Gamma_{\st _{\Sigma}(-a)}),k)\otimes_{k}k(-a)}
\end{displaymath}
\noindent {\em Case 3:} If $C'\in \Sigma(-a)$ but $C\in \st_{\Sigma}(-a)$. In the diagram
\begin{displaymath}
    \xymatrix{ (R_{C'})_a \ar[r] \ar[d] & (R_C)_a \ar[d] \\
               (k[{\mathcal M}_{\Sigma(-a)}]_{C'})_a  \ar[r]  & \Hom _{\Z}(\mathcal{C}_{i}(\Gamma_{\st _{\Sigma}(-a)}),k)\otimes_{k}k(-a)}
\end{displaymath}
the horizontal map below is zero. According to Lemma \ref{iso-cone} the above horizontal map is also zero. This concludes the proof of the theorem.
\end{proof}

\begin{prop}
\label{compare}
We always have
$$
H^i_{\mm}(\+R)=\bigoplus_{a\in -|\overline{\mathcal M}|}H^i_{\mm}(\+R)_a,
$$
and $H^i_{\mm}(\+R)$ is a $k$-direct summand of $\bigoplus_{a\in -|\overline{\mathcal M}|}H^i_{\mm}(R)_a.$
\end{prop}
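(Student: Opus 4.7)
The plan is to deduce both statements from Theorem \ref{vanishing} and Theorem \ref{lc-combin} applied to the seminormalization $\+R = k[\+{\Mcc}]$. The key preliminary remark is that seminormalization does not alter normalization data: since $M_C \subseteq \+{M_C} \subseteq \overline{M_C}$ and $\Z M_C = \Z \+{M_C}$, we have $\overline{\+{M_C}} = \overline{M_C}$ for every cone $C \in \Sigma$. Consequently $|\overline{\+{\Mcc}}| = |\overline{\Mcc}|$, and the cellular star $\st_\Sigma(-a)$ is the same whether we start from $\Mcc$ or from $\+{\Mcc}$.

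For the first equality, note that $\+{\Mcc}$ is seminormal by construction, so Theorem \ref{vanishing} applied to $\+R$ forces $H^i_{\mm}(\+R)_a = 0$ whenever $a \notin -|\overline{\+{\Mcc}}| = -|\overline{\Mcc}|$. Together with the standard $\Z^d$-graded decomposition of local cohomology of a $\Z^d$-graded module, this yields the claimed identity.

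For the second claim, I apply Theorem \ref{lc-combin} to both complexes. The seminormal version gives
\[
H^i_{\mm}(\+R)_a \cong \widetilde{H}^{i-1}(\Gamma_{\st_\Sigma(-a)}) \otimes_k k(-a),
\]
while the general version gives
\[
H^i_{\mm}(R)_a \cong H^i_{\mm}(R_{\Sigma(-a)})_a \oplus \widetilde{H}^{i-1}(\Gamma_{\st_\Sigma(-a)}) \otimes_k k(-a).
\]
Since the combinatorial term is identical for the two rings by the preliminary remark, $H^i_{\mm}(\+R)_a$ is visibly a $k$-vector space direct summand of $H^i_{\mm}(R)_a$ for each $a$. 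Summing over $a \in -|\overline{\Mcc}|$ and invoking the first assertion gives the conclusion. The proof is essentially a direct assembly of Theorems \ref{vanishing} and \ref{lc-combin}; no serious obstacle is anticipated beyond the insensitivity of $\st_\Sigma(-a)$ and of $|\overline{\Mcc}|$ to seminormalization, which is precisely the remark isolated at the outset.
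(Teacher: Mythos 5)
Your proposal is correct and follows essentially the same route as the paper: the first assertion from Theorem \ref{vanishing} applied to the seminormal complex $\+{\Mcc}$, and the second from comparing the two instances of Theorem \ref{lc-combin}, using that $M_C \subseteq \+{M_C} \subseteq \overline{M_C}$ makes $\st_\Sigma(-a)$ and $-|\overline{\Mcc}|$ insensitive to seminormalization. The paper phrases this slightly more compactly by substituting to obtain $H^i_{\mm}(R)_a \cong H^i_{\mm}(R_{\Sigma(-a)})_a \oplus H^i_{\mm}(\+R)_a$, but the content is identical.
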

\begin{proof}
Since $\+R$ is seminormal, Theorem \ref{vanishing} shows that $H^i_{\mm}(\+R)_a=0$ for $~ a\notin -|\overline{\mathcal M}|$.
Thus the first statement is clear. The second statement follows from Theorem \ref{lc-combin}. Indeed, for each $C\in \Sigma$, we have $M_C \subseteq \+{M_C}
\subseteq \overline{M_C}$ so the sets $\Sigma(a)$ and $\st_{\Sigma}(a)$ do not change when one passes from $\Mcc$ to $\+\Mcc$. Thus
\[
H^i_{\mm}(R)_a\cong H^i_{\mm}(R_{\Sigma(-a)})_a \oplus H^i_{\mm}(\+R)_a
\]
and this implies the desired conclusion.
\end{proof}
\begin{cor}
\label{monoidvanishing}
Let $M$ be a positive affine monoid and $a\in -\overline{M}$. Denote by $\Sigma$ the cone $\R_+M$. Then
\[
H^i_{\mm}(k[M_{\Sigma(-a)}])_a =0.
\]
\end{cor}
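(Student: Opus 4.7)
The plan is to prove the vanishing directly from the \v{C}ech complex given by Theorem \ref{Cech}, bypassing the more elaborate machinery of Theorem \ref{lc-combin} or Proposition \ref{compare}. By Theorem \ref{Cech}, $H^i_{\mm}(k[M_{\Sigma(-a)}])_a$ is the $i$-th cohomology of the degree-$a$ component of the complex $L^{\pnt}(\Mcc_{\Sigma(-a)})$; its $t$-th piece is
\[
L^t(\Mcc_{\Sigma(-a)})_a \;=\; \bigoplus_{C\in\Sigma(-a),\ \dim C=t}(R'_C)_a, \qquad R'=k[M_{\Sigma(-a)}],
\]
and by Lemma \ref{iso-cone}(i) each $(R'_C)_a$ is isomorphic to $(R_C)_a=((k[M])_{M_C})_a$, where $R=k[M]$. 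So it is enough to show $(R_C)_a=0$ for every $C\in\Sigma(-a)$; the entire \v{C}ech complex will then vanish in degree $a$, and hence so will all of its cohomology.

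For this, suppose toward a contradiction that $(R_C)_a\ne 0$, i.e.\ that $a=b-c$ for some $b\in M$ and $c\in M_C$. Since $a\in -\overline{M}$, we have $-a\in\overline{M}=\Z M\cap\R_+M$, and in particular $-a\in\R_+M$. Because $C$ is a face of the polyhedral cone $\R_+M$, there is a linear functional $\ell$ on $\R^d$ with $\ell\ge 0$ on $\R_+M$ and $C=\{x\in\R_+M:\ell(x)=0\}$; note that $C\ne\R_+M$, since $-a\in\overline{M_{\R_+M}}=\overline{M}$ would put $\R_+M$ into $\st_{\Sigma}(-a)$, contrary to $C\in\Sigma(-a)$. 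From $a=b-c$ together with $\ell(c)=0$ and $\ell(b)\ge 0$, I get $\ell(-a)=-\ell(b)\le 0$; combining with $\ell(-a)\ge 0$ (since $-a\in\R_+M$) forces $\ell(b)=0$, so $b\in C\cap M=M_C$. Then $-a=c-b\in \Z M_C$, and together with $-a\in C$ this gives $-a\in\Z M_C\cap C=\overline{M_C}$, contradicting $C\in\Sigma(-a)$.

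The main obstacle is spotting this direct route: the natural first move is to invoke Proposition \ref{compare} and try to reduce the claim to showing that the canonical map $H^i_{\mm}(k[M])_a\to H^i_{\mm}(k[\+M])_a$ is an isomorphism in degree $a\in -\overline{M}$. However, the cokernel $k[\+M]/k[M]$ can be infinite-dimensional over $k$, and its \v{C}ech localizations may well be nonzero in degrees related to $-\overline{M}$, so that comparison is delicate. Once one realizes that a single supporting functional of each proper face $C$ of $\R_+M$ already forces the contradiction, the proof collapses to the short calculation above.
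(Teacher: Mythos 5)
Your argument is correct, and it takes a genuinely more elementary route than the paper. The paper deduces Corollary \ref{monoidvanishing} from Proposition \ref{compare} (hence from the decomposition in Theorem \ref{lc-combin}) together with the isomorphism $H^i_{\mm}(k[M])_a\cong H^i_{\mm}(k[\+M])_a$ for $a\in -\overline{M}$ from \cite[Proposition 4.4]{BLR}: once one knows
$H^i_{\mm}(R)_a\cong H^i_{\mm}(R_{\Sigma(-a)})_a \oplus H^i_{\mm}(\+R)_a$
and that the first and last terms are isomorphic finite-dimensional spaces, the middle summand must vanish. Your proof bypasses both Theorem \ref{lc-combin} and the BLR input entirely: after invoking Lemma \ref{iso-cone}(i) to replace $(R'_C)_a$ by $(R_C)_a$, a single supporting linear functional $\ell$ of the proper face $C$ forces $\ell(b)=0$, hence $b\in M_C$, hence $-a\in\Z M_C\cap C=\overline{M_C}$, contradicting $C\in\Sigma(-a)$. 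In fact you prove the stronger statement that the whole complex $L^{\pnt}(\Mcc_{\Sigma(-a)})_a$ is zero degree by degree, not merely that its cohomology vanishes. The one small caveat is that your parenthetical about what would go wrong with Proposition \ref{compare} (infinite-dimensional cokernel of $k[M]\hookrightarrow k[\+M]$) is not quite what the paper does -- the paper does not pass through that cokernel, it compares summands in the direct-sum decomposition -- but since you wrote blind, this is only a minor mischaracterization of an alternative approach, not a flaw in your own argument. What your approach buys is independence from \cite{BLR} and a cleaner picture of \emph{why} the vanishing holds (a convexity constraint on \v{C}ech denominators), at the cost of a slightly longer explicit computation; the paper's route buys brevity by recycling machinery already in place.
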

This follows from the proof of Proposition \ref{compare} and the fact that
$$
H^i_{\mm}(k[M])_a\cong H^i_{\mm}(\+{k[M]})_a,
$$
for $a\in -\overline{M}$, see \cite[Proposition 4.4]{BLR}.

We have a complete analog of \cite[Corollary 4.6]{BLR} with the same proof. It shows that for properties like Cohen-Macaulayness and Serre's
condition $(S_r)$, restriction to the class of seminormal toric face rings is reasonable.
\begin{cor}
Let $\Mcc$ be a monoidal complex supported on a rational pointed fan $\Sigma$ in $\R^d$, and $R=k[\Mcc]$. Then:
\begin{enumerate}
 \item If $\Mcc$ is Cohen-Macaulay over $k$, then so is $\+{\Mcc}$.
\item If $\depth R\ge r$ then $\depth \+R \ge r$.
\item If $R$ satisfies $(S_r)$ then $\+R$ satisfies $(S_r)$.
\end{enumerate}

\end{cor}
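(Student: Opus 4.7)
My plan rests on Proposition \ref{compare}: the local cohomology $H^i_{\mm}(\+R)$ sits as a $\Z^d$-graded $k$-summand of $\bigoplus_{a\in -|\overline{\mathcal M}|}H^i_{\mm}(R)_a$. Combined with the standard observation that $R\hookrightarrow \+R$ is a finite integral extension (so $\dim R=\dim \+R$ and both rings share the same graded maximal ideal up to radical), this makes parts (i) and (ii) essentially formal.

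For (ii), if $\depth R\ge r$ then $H^i_{\mm}(R)=0$ for $i<r$; all graded pieces of $H^i_{\mm}(\+R)$ then vanish for $i<r$ by Proposition \ref{compare}, so $\depth \+R\ge r$. Statement (i) is the case $r=\dim R=\dim \+R$: Cohen-Macaulayness of $R$ gives $\depth \+R\ge \dim \+R$, hence $\+R$ is Cohen-Macaulay.

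Part (iii) is the subtle one and requires a local version of the preceding argument. Since $\+R$ is $\Z^d$-graded, it suffices to verify $(S_r)$ at graded primes, and each graded prime $\pp'$ of $\+R$ restricts to a graded prime $\pp=\pp'\cap R$ of the same height. I would then run the argument of (ii) on the localizations $R_\pp\to (\+R)_{\pp'}$, once the crucial compatibility is in place: the localization $(\+R)_{\pp'}$ should agree with the seminormalization of $R_\pp$ in a way that preserves a Proposition \ref{compare}-style summand relation between their local cohomologies. Using the algebra retracts $k[M_C]\hookrightarrow k[\Mcc]$ from Proposition \ref{inverselimit} and the face-by-face construction of $\+\Mcc$, I expect this to reduce to the affine monoid ring situation already handled in \cite[Corollary 4.6]{BLR}.

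The main obstacle is precisely this local compatibility: identifying the graded localizations of $\+R$ at non-maximal primes with seminormalizations of the corresponding graded localizations of $R$, in spite of the fact that toric face rings are not domains and the localized rings may be semi-local. Once this is secured, the depth inequality from (ii) transfers to $\depth (\+R)_{\pp'}\ge \min(r,\dim (\+R)_{\pp'})$, completing (iii).
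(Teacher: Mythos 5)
Your treatment of (i) and (ii) is correct and coincides with the paper's intended argument: Proposition \ref{compare} forces $H^i_\mm(\+R)$ to vanish whenever $H^i_\mm(R)$ does, the graded maximal ideal of $\+R$ is the radical of $\mm\+R$ (so both compute the same depth), and the finiteness of $R\hookrightarrow\+R$ gives $\dim R=\dim\+R$, making (i) the special case $r=\dim R$ of (ii).

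Part (iii), however, is where your proposal has a genuine gap, and the route you sketch is not the one the paper takes. The paper states that the corollary ``has the same proof'' as \cite[Corollary 4.6]{BLR}, which is a \emph{global} argument: Serre's condition $(S_r)$ for a graded $k$-algebra is equivalent (via graded local duality, the Schenzel-type criterion) to bounds $\dim\bigl(H^i_\mm(-)\bigr)^\vee\le i-r$ for $i<\dim$, where $(-)^\vee$ is the graded Matlis dual; these are finitely generated $\Z^d$-graded modules, and the degree-by-degree splitting of Proposition \ref{compare} (equivalently, the containment of $\Z^d$-degree supports coming from Theorem \ref{lc-combin}) is enough to compare their dimensions combinatorially. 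No localization at non-maximal primes is involved.

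Your localization approach runs into an obstacle that is not merely technical. The entire machinery behind Proposition \ref{compare} --- the $\Z^d$-grading, the \v{C}ech complex $L^{\pnt}(\Mcc)$, Theorem \ref{lc-combin}, the decomposition of $H^i_\mm(R)_a$ in terms of $\st_\Sigma(-a)$ --- lives in the category of $\Z^d$-graded modules over a toric face ring. Once you localize $R$ at a non-maximal graded prime $\pp$, you leave this category entirely: $R_\pp$ is not a toric face ring, there is no $\Z^d$-grading, and no analogue of the combinatorial splitting is available. Moreover, $(\+R)_{\pp'}$ need not be the seminormalization of $R_\pp$ as a local ring (seminormalization of the semilocal ring $(\+R)_\pp$ need not split as a product of the pieces at the individual $\pp'$ over $\pp$), and there is no reason for $H^i_{\pp R_\pp}\bigl((\+R)_{\pp'}\bigr)$ to be a direct summand of $H^i_{\pp R_\pp}(R_\pp)$. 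You correctly identify this as ``the main obstacle,'' but it is not a compatibility you can hope to establish: the summand structure is a consequence of the $\Z^d$-grading, which localization destroys. So (iii) needs the global dimension-of-local-cohomology criterion rather than a prime-by-prime reduction of (ii).
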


In the case of affine monoid rings, the following theorem was proved in \cite{BLR}.
\begin{thm}[\cite{BLR}, Theorem 4.7]
\label{cohomologycharacterization}
Let $M$ be a positive affine monoid. Then the following are equivalent:
\begin{enumerate}
 \item $M$ is seminormal;
\item $H^i_{\mm}(R)_a=0$ for all $i$ and all $a\in \Z M$ such that $a\notin -\overline{M}$.
\end{enumerate}

\end{thm}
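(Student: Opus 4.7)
My plan is to prove the two implications separately. Associate to $M$ the monoidal complex $\Mcc$ on $\Sigma=\Fc(\R_+M)$ with $M_C=M\cap C$, so that $k[\Mcc]=k[M]=R$ and $|\overline{\Mcc}|=\overline{M}$. For $(i)\Rightarrow(ii)$, I would first verify that seminormality descends to faces: if $x\in\Z(M\cap C)$ satisfies $2x,3x\in M\cap C$, then $x\in M$ by seminormality of $M$, and $x\in C$ because $2x$ lies in the cone $C$. Thus $\Mcc$ is a seminormal monoidal complex, and Theorem \ref{vanishing} applied to $\Mcc$ gives the vanishing asserted in (ii), since $|\overline{\Mcc}|=\overline{M}$.

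For the converse $(ii)\Rightarrow(i)$, the strategy is to compare $R$ with its seminormalization $\+R=k[\+\Mcc]$ through the combinatorial description of local cohomology. Applying Theorem \ref{lc-combin} to both $\Mcc$ and $\+\Mcc$ (noting that $\Sigma(-a)$ and $\st_{\Sigma}(-a)$ depend only on the cones and their normalizations $\overline{M_C}$, which agree for $M_C$ and $\+M_C$), one obtains in each degree $a\in\Z^d$ the splitting
\[
H^i_\mm(R)_a \;\cong\; H^i_\mm(R_{\Sigma(-a)})_a \;\oplus\; H^i_\mm(\+R)_a.
\]
When $a\in -\overline{M}$, Corollary \ref{monoidvanishing} annihilates the first summand, yielding $H^i_\mm(R)_a\cong H^i_\mm(\+R)_a$. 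When $a\notin -\overline{M}$, hypothesis (ii) kills $H^i_\mm(R)_a$ outright, and Theorem \ref{vanishing} applied to the seminormal $\+\Mcc$ kills $H^i_\mm(\+R)_a$. In either case the natural map $H^i_\mm(R)_a\to H^i_\mm(\+R)_a$ induced by the inclusion $R\hookrightarrow\+R$ is forced to be an isomorphism.

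Because $\+R$ is finite over $R$, the quotient $\+R/R$ is a finitely generated graded $R$-module, and the long exact sequence of local cohomology associated to $0\to R\to\+R\to\+R/R\to 0$ then forces $H^i_\mm(\+R/R)=0$ for all $i$. Since a nonzero finitely generated graded module over a Noetherian graded ring admits some nonvanishing local cohomology, I conclude $\+R/R=0$, i.e., $M=\+M$, which is (i). The main obstacle I anticipate is verifying that the splitting in Theorem \ref{lc-combin} is compatible with the inclusion $R\hookrightarrow\+R$: the argument should go by tracing the \v{C}ech-complex proof of that theorem and observing that for $D\in\st_{\Sigma}(-a)$ both $(R_D)_a$ and $(\+R_D)_a$ are one-dimensional and the localization map between them is the identity (since $M_D$ and $\+M_D$ share the same group), while on the first summand the map $R_{\Sigma(-a)}\hookrightarrow\+R_{\Sigma(-a)}$ has both source and target with vanishing $a$-th local cohomology (again by Corollary \ref{monoidvanishing} and Theorem \ref{vanishing} applied to $\+\Mcc_{\Sigma(-a)}$). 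Hence functoriality makes the identification automatic on the combinatorial summand, which is where the nonzero contribution lives.
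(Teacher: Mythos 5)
Your proof is correct. The paper does not supply its own proof of this statement --- it is quoted as \cite[Theorem 4.7]{BLR} --- but the remark immediately after it observes that the proof in \cite{BLR} ``depends crucially on Corollary \ref{monoidvanishing}''; your reconstruction (reducing to the assertion that the natural map $H^i_\mm(R)_a\to H^i_\mm(\+R)_a$ is an isomorphism in every degree via the splitting of Theorem \ref{lc-combin}, killing the $\Sigma(-a)$-summand with Corollary \ref{monoidvanishing} when $a\in-\overline M$ and with hypothesis (ii) together with Theorem \ref{vanishing} when $a\notin-\overline M$, and then concluding $\+R=R$ from the long exact sequence of $0\to R\to\+R\to\+R/R\to 0$) is faithful to that same strategy.
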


We would like to extend the cohomological characterization of seminormality of Bruns, Li and R\"omer to toric face rings. However, we do not have
an answer for the following question.

\begin{quest}
\label{lctosemi}
Is it true that for a toric face ring $k[\Mcc]$ the following statements are equivalent?
\begin{enumerate}
 \item $\mathcal M$ is seminormal;
\item $H^i_{\mm}(R)_a=0$ for all $i$ and all $a\in \Z^d$ such that $a\notin -|\overline{\mathcal M}|$.
\end{enumerate}
\end{quest}

\begin{rem}
 The method to prove Theorem \ref{cohomologycharacterization} given in \cite{BLR} cannot be directly generalized to deal with toric face rings. Analyzing this
proof, we observe that it depends crucially on Corollary \ref{monoidvanishing}. However, in contrast to the situation of affine monoid rings,
the next example shows that for some $2$-dimensional toric face ring $k[\Mcc]$ and some $a\in -|\overline{\mathcal M}|$,
it can happen that $H^2_{\mm}(R_{\Sigma(-a)})_a \neq 0$.
\end{rem}

\begin{ex}
\label{toricfaceringnonvanishing}
Let $\Sigma$ be the fan in $\R ^2$ consists of two maximal cones $C$ with generators $x=(3,0), y=(3,1), z=(3,3)$ and $C'$ with two
 generators $z$ and $t=(0,1)$. Define $\Mcc$ to be the monoidal complex supported on $\Sigma$ with two maximal monoids: $M$ generated by
$x,y, z$, $M'$ generated by $z, t$. The toric face ring $k[\Mcc]$ is $k[x,y,z,t]/(x^2z-y^3,xt,yt)$.

Let $D$ be the ray spanned by $t$, let $a=-t=(0,-1)$.

\bigskip
\bigskip

\setlength{\unitlength}{4cm}
\begin{picture}(1,1)
\thicklines
\put(1.5,-0.28){\line(0,1){1.2}}
\put(1.5,0){\line(1,0){1}}
\put(1.5,0){\line(1,1){0.85}}

\put(1.41,-0.06){$O$}
\put(1.4,-0.19){$a$}
\put(1.5,-0.2){\circle*{0.04}}
\put(2.11,-0.07){$x$}
\put(2.1,0){\circle*{0.04}}
\put(2.15,0.18){$y$}
\put(2.1,0.2){\circle*{0.04}}
\put(2.13,0.58){$z$}
\put(2.1,0.6){\circle*{0.04}}
\put(1.42,0.20){$t$}
\put(1.5,0.2){\circle*{0.04}}
\put(1.55,0.8){$D$}
\put(2.4,0.5){$C$}

\put(2.1,0.2){\line(0,2){.4}}
\put(2.1,0){\line(0,1){.2}}
\end{picture}

\bigskip
\bigskip

\bigskip
\bigskip

Note that $-a \in \overline{M_D}=M_D$ and $\Sigma(-a)= \Fc(C)$, the face poset of $C$. Thus
\[
H^2_{\mm}(R_{\Sigma(-a)})_a = H^2_{\mm}(k[M])_a \cong k,
\]
the last equality can be checked by computation on the \v{C}ech of the monoid ring $A=k[M]$,
\[
0 \to A \to A_x \oplus A_z \to A_C \to 0.
\]
Indeed, $x/y$ is a non-zero element of $(A _C)_a$ while $(A_x)_a = (A_z)_a=0$.
\end{ex}

Theorem 5.5 of \cite{BBR} delivered a formula for the graded local cohomology of toric face rings by the machinery of cohomology
 of sheaves on partially ordered set. This formula, which generalizes Hochster's formula for local cohomology of Stanley-Reisner rings, was reproved
in \cite[Corollary 4.7]{IR} by the more compact language of toric face rings. In the Corollary \ref{BBR-lc}, we derive this formula from Theorem
 \ref{lc-combin}.

\begin{defn}
For each cone $C$ of $\Sigma$, denote by $\st _{\Sigma}(C)=\{D\in \Sigma: C \subseteq D\}$. We also denote by $\Sigma(C)$
the set $\Sigma \setminus \st_{\Sigma}(C)$, which is a subfan of $\Sigma$. In the same way as in Definition \ref{star}, we define the complexes
$\mathcal{C}_{\pnt}(\Gamma_{\st_{\Sigma}(C)})$ and $\mathcal{C}^{\pnt}(\Gamma_{\st_{\Sigma}(C)})$, and the homology and cohomology groups
$\widetilde{H}_i(\Gamma_{\st_{\Sigma}(C)})$ and $\widetilde{H}^i(\Gamma_{\st_{\Sigma}(C)})$.
\end{defn}

The set $\st_{\Sigma}(C)$ is partially ordered by inclusion. Denote by $\Delta(\st_{\Sigma}(C))$ the order complex of
$\st_{\Sigma}(C)\setminus \{C\}$, which is the simplicial complex whose faces are the chains of $\st_{\Sigma}(C)\setminus \{C\}$. Denote by
$\widetilde{H}_i(\Delta(\st_{\Sigma}(C))), \widetilde{H}^i(\Delta(\st_{\Sigma}(C)))$ the simplicial homology and cohomology of $\Delta(\st_{\Sigma}(C))$ with
 coefficients in $k$.
\begin{lem}[\cite{IR}, Lemma 4.6]
\label{simplicialhom}
With the above notation we have for each $i\in \Z$:
$$
\widetilde{H}_i(\Gamma_{\st_{\Sigma}(C)})\cong \widetilde{H}_{i-\dim C}(\Delta(\st_{\Sigma}(C))),
\widetilde{H}^i(\Gamma_{\st_{\Sigma}(C)})\cong \widetilde{H}^{i-\dim C}(\Delta(\st_{\Sigma}(C)))
$$
\end{lem}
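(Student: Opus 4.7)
The plan is to identify $\widetilde H_\bullet(\Gamma_{\st_\Sigma(C)})$---which, by the standard identification of cellular with singular relative homology for CW pairs, equals $H_\bullet(|\Gamma_\Sigma|, |\Gamma_{\Sigma(C)}|)$---with the local homology of $|\Gamma_\Sigma|$ at an interior point $x$ of the open cell $e_C$, and then to evaluate that local homology using the regular CW structure and the reduced join formula.

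First I would establish that the inclusion $|\Gamma_{\Sigma(C)}| \hookrightarrow |\Gamma_\Sigma| \setminus \{x\}$ is a deformation retract, producing
\[
H_i(|\Gamma_\Sigma|, |\Gamma_{\Sigma(C)}|) \;\cong\; H_i(|\Gamma_\Sigma|, |\Gamma_\Sigma| \setminus \{x\}).
\]
The retraction is built in two stages: first retract $|\Gamma_\Sigma| \setminus \{x\}$ onto $|\Gamma_\Sigma| \setminus |e_C|$ by radially pushing $|e_C| \setminus \{x\}$ onto $\partial \bar e_C \subseteq |\Gamma_{\Sigma(C)}|$; then, inside each closed cell $\bar e_D$ with $C \subsetneq D$, retract $\bar e_D \setminus |e_C|$ onto the union of faces of $\bar e_D$ not containing $e_C$ in their closure (the ``anti-star'' of $e_C$ in $\bar e_D$), and glue these local retractions consistently along overlaps, using that the CW structure on $\Gamma_\Sigma$ is regular. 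Since the anti-star in each $\bar e_D$ lies inside $|\Gamma_{\Sigma(C)}|$, this gives the required deformation retraction.

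Next I would compute the local homology via the local product structure of a regular CW complex: an open neighborhood of $x$ is homeomorphic to $\R^n \times C(L)$, with $n = \dim e_C = \dim C - 1$, where $L$ is the topological link of $e_C$ and $C(L)$ is the open cone on $L$, with $x$ corresponding to the product of the origin and the cone apex. Since this product punctured at $x$ deformation retracts onto the join $\mathbb{S}^{n-1} \ast L$, excision gives
\[
H_i(|\Gamma_\Sigma|, |\Gamma_\Sigma| \setminus \{x\}) \;\cong\; \widetilde H_{i-1}(\mathbb{S}^{n-1} \ast L).
\]
Over the field $k$ the reduced join formula $\widetilde H_r(X \ast Y) \cong \bigoplus_{p+q = r-1} \widetilde H_p(X) \otimes_k \widetilde H_q(Y)$, combined with $\widetilde H_\bullet(\mathbb{S}^{n-1})$ being concentrated in degree $n-1$, collapses this to $\widetilde H_{i - \dim C}(L)$. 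Finally $L$ is identified with $|\Delta(\st_\Sigma(C))|$ by the general fact that the topological link of a cell $e$ in a regular CW complex with face poset $P$ is the geometric realization of the order complex of $P_{>e}$; here $P_{>C} = \st_\Sigma(C) \setminus \{C\}$. The cohomological statement follows by applying $\Hom_k(-,k)$ to the chain-level quasi-isomorphism and invoking the universal coefficient theorem.

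The main obstacle is the second stage of the deformation retract, namely $|\Gamma_\Sigma| \setminus |e_C| \searrow |\Gamma_{\Sigma(C)}|$. Inside a single closed cell $\bar e_D$ with $D \supseteq C$, retracting $\bar e_D \setminus |e_C|$ onto its anti-star is standard (a closed ball minus an open face of its boundary deformation retracts onto the complementary subcomplex of the boundary sphere). The difficulty is gluing these cell-by-cell retractions into a globally defined homotopy, which requires compatibility on the common faces $\bar e_{D} \cap \bar e_{D'}$. This uses essentially that $\Gamma_\Sigma$ is a regular CW complex, so that each $\bar e_D$ is a closed ball whose face decomposition matches the face poset of the cone $D$, and is the only step of the argument beyond formal algebraic topology.
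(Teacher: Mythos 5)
The paper itself does not prove this lemma; it is cited verbatim from Ichim--R\"omer, so there is no in-text argument to compare against and I am assessing your proposal on its own merits. Your route is a sound topological argument and gives the correct isomorphisms with the correct degree shift: $e_C$ has cell dimension $n=\dim C-1$, so the suspension by $\mathbb S^{n-1}$ in the join formula shifts degrees by $n$, and combined with the shift by $1$ from the long exact sequence of the pair one gets exactly $i-\dim C$. The identification of the relative cellular chain complex with singular relative homology of the pair $(|\Gamma_\Sigma|,|\Gamma_{\Sigma(C)}|)$, the excision and long-exact-sequence step, the join formula (which has no Tor correction here since one factor is a sphere), and the identification of the link with $|\Delta(\st_\Sigma(C))|$ are all correct.

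The place where you are right to flag a gap is the deformation retraction $|\Gamma_\Sigma|\setminus\{x\}\searrow|\Gamma_{\Sigma(C)}|$ and the local product structure $\R^n\times C(L)$. Both are genuine facts about \emph{regular} CW complexes but are not purely formal, and the cell-by-cell gluing you sketch is delicate. A cleaner way to close this is to work in the barycentric subdivision: since the face poset of a regular CW complex is a CW poset, $|\Gamma_\Sigma|$ is homeomorphic to $|\Delta(\Sigma\setminus\{0\})|$, and taking $x$ to be the barycenter $\hat C$ turns both claims into standard simplicial facts -- the link of the vertex $\hat C$ is $\Delta((\Sigma\setminus\{0\})_{<C})\ast\Delta((\Sigma\setminus\{0\})_{>C})\cong\mathbb S^{n-1}\ast\Delta(\st_\Sigma(C))$, and $|\Gamma_\Sigma|\setminus\{\hat C\}$ retracts onto the simplicial anti-star of $\hat C$. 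One then only needs the (easy) poset-topology fact that this anti-star retracts further onto $|\Gamma_{\Sigma(C)}|$. Finally, you should note separately the degenerate case $C=\{0\}$: there $e_C=\emptyset$ and no interior point $x$ exists, but then $\st_\Sigma(\{0\})=\Sigma$, $\Delta(\st_\Sigma(\{0\}))$ is the barycentric subdivision of $\Gamma_\Sigma$, and the shift is by $0$, so the statement is the homeomorphism-invariance of reduced homology and needs no local argument.
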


\begin{cor}[Brun, Bruns, R\"omer \cite{BBR}]
\label{BBR-lc}
Let $\Sigma$ be a rational pointed fan in $\R^d$, and $R=k[\Sigma]$. Then for all $i\ge 0$, there are isomorphisms of finely
 graded $k$-modules
\begin{align}
H^i_{\mm}(R)  &\cong \bigoplus _{C\in \Sigma}\bigoplus_{a \in -\relint C}\widetilde{H}^{i-1}(\Gamma_{\st _{\Sigma}(C)})\otimes_{k}k(-a) \nonumber\\
              &\cong \bigoplus _{C\in \Sigma}\bigoplus_{a \in -\relint C}\widetilde{H}^{i-\dim C-1}(\Delta(\st_{\Sigma}(C)))\otimes_{k}k(-a). \nonumber
\end{align}

\end{cor}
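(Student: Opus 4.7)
The plan is to derive the formula directly from Theorem \ref{lc-combin} together with Lemma \ref{simplicialhom}. First I would observe that a Stanley toric face ring is automatically seminormal: by construction $M_C = C \cap \Z^d$, which is a normal affine monoid (the standard consequence of Gordan's lemma applied to a rational pointed cone), and normal monoids are trivially seminormal. Hence the seminormal case of Theorem \ref{lc-combin} applies and yields
\[
H^i_{\mm}(R)_a \cong \widetilde{H}^{i-1}(\Gamma_{\st _{\Sigma}(-a)}) \otimes_k k(-a)
\]
for every $a \in \Z^d$ and every $i \ge 0$.

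Next I would regroup the decomposition $H^i_{\mm}(R) = \bigoplus_{a \in \Z^d} H^i_{\mm}(R)_a$ according to the value of $\st_{\Sigma}(-a)$. In the Stanley case $\overline{M_D} = M_D = D \cap \Z^d$, so by Definition \ref{star}
\[
\st_{\Sigma}(-a) = \{ D \in \Sigma : -a \in D \}.
\]
The relative interiors of the cones of $\Sigma$ partition $|\Sigma|$, so if $-a \in |\Sigma|$ there is a unique cone $C \in \Sigma$ with $-a \in \relint C$, and the fan axioms give $\{D \in \Sigma : -a \in D\} = \{D \in \Sigma : C \subseteq D\} = \st_{\Sigma}(C)$. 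If instead $-a \notin |\Sigma|$, then $\Sigma(-a) = \Sigma$, the quotient complex $\mathcal{C}_{\pnt}(\Gamma_{\st_{\Sigma}(-a)})$ is zero, and that graded piece contributes nothing. Re-indexing the sum by the pair $(C, a)$ with $a \in -\relint C$ produces
\[
H^i_{\mm}(R) \cong \bigoplus_{C \in \Sigma} \bigoplus_{a \in -\relint C} \widetilde{H}^{i-1}(\Gamma_{\st_{\Sigma}(C)}) \otimes_k k(-a),
\]
which is the first displayed isomorphism.

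For the second isomorphism I would simply invoke Lemma \ref{simplicialhom}, which gives the identification $\widetilde{H}^{i-1}(\Gamma_{\st_{\Sigma}(C)}) \cong \widetilde{H}^{i-\dim C - 1}(\Delta(\st_{\Sigma}(C)))$ term by term, and transport it through the sum.

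There is no real obstacle here beyond careful bookkeeping; the only point that deserves explicit verification is the purely combinatorial identity $\st_{\Sigma}(-a) = \st_{\Sigma}(C)$ when $-a \in \relint C$, which follows from the fact that in a fan the minimal cone containing a point $x$ is the unique cone whose relative interior contains $x$, and that every larger cone of $\Sigma$ also contains $x$. With Theorem \ref{lc-combin} and Lemma \ref{simplicialhom} available, this is the entire argument.
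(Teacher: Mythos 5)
Your proposal is correct and follows essentially the same route as the paper: observe $M_C = C\cap\Z^d$ is normal (hence seminormal), apply the seminormal case of Theorem \ref{lc-combin}, regroup the graded pieces by the unique cone $C$ with $-a\in\relint C$, and finish with Lemma \ref{simplicialhom}. Your handling of the case $-a\notin|\Sigma|$ (the quotient chain complex is zero) is a slightly more direct substitute for the paper's invocation of Theorem \ref{vanishing}, but the argument is the same.
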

\begin{proof}
Note that for every $C\in \Sigma$ we have $M_C=C\cap \Z^d$ is a normal monoid. Thus $R$ is seminormal and $|\overline{\mathcal M}|=|\Sigma|$.
We also have $\st_{\Sigma}(a)=\{D \in \Sigma: a\in D\}$. Therefore $\st_{\Sigma}(a)=\st_{\Sigma}(C)$ if $a\in \relint C$.

From Theorem \ref{vanishing}, we can restrict our attention to graded pieces $H^i_{\mm}(R)_a$ where $a\in -|\Sigma|$. For each $a\in -|\Sigma|$,
there is a unique $C$ such that $-a\in \relint C$. So apply Theorem \ref{lc-combin} and Lemma \ref{simplicialhom} we have:
\begin{align}
H^i_{\mm}(R) & \cong \bigoplus _{a\in -|\Sigma|}\widetilde{H}^{i-1}(\Gamma_{\st _{\Sigma}(-a)})\otimes_{k}k(-a) \nonumber\\
             & \cong \bigoplus _{C\in \Sigma}\bigoplus_{a \in -\relint C}\widetilde{H}^{i-1}(\Gamma_{\st _{\Sigma}(C)})\otimes_{k}k(-a) \nonumber\\
             & \cong \bigoplus _{C\in \Sigma}\bigoplus_{a \in -\relint C}\widetilde{H}^{i-\dim C-1}(\Delta(\st_{\Sigma}(C)))\otimes_{k}k(-a). \nonumber
\end{align}
This concludes the proof of the corollary.

\end{proof}


\section{Depth and the Cohen-Macaulay property}
\label{depth}
In this section, we present an alternative description for the depth of a seminormal toric face ring. The main tool is Theorem \ref{lc-combin}.
\begin{defn}
For each $i=0,1,\ldots, \dim R$, denote by $\Sigma^{(i)}$ the set $\{C\in \Sigma: \dim C\le i\}$. We call $\Sigma^{(i)}$, which is a subfan of $\Sigma$,
 the {\em $i$-skeleton} of $\Sigma$. Denote by $\Mcc ^{(i)}$ the restriction $\Mcc _{\Sigma^{(i)}}$, which is called the
{\em $i$-skeleton} of $\Mcc$.
\end{defn}
Define the number
$$
m_k(\Mcc)=\max \{i\le \dim R : \Mcc^{(t)} \ \text{is Cohen-Macaulay over $k$ for all} \ 0\le t\le i\}.
$$
This makes sense because $k[\Mcc ^{(0)}]=k$ is Cohen-Macaulay. Actually if $\Sigma$ is not trivial, i.e.~ not only the origin, then $m_k(\Mcc) \ge 1$.
 Indeed, if $\Sigma$ is not trivial, it is easy to see that $k[\Mcc ^{(1)}]$ is a Stanley-Reisner ring of dimension $1$, which is well-known to be
 Cohen-Macaulay \cite[Exercise 5.1.26]{BH}.
We can now prove a rank-selection theorem, see also (in time order) Munkres \cite[Theorem 3.1]{Mun}, D. Smith \cite[Theorem 4.8]{Smi}, Hibi
 \cite[Corollary 2.6]{Hibi}, Duval \cite[Corollary 6.5]{Duva} and Brun and R\"omer \cite[Example 5.8]{BR2}.

\begin{thm}
If $\Mcc$ is seminormal then
\[
\depth R = m_k(\Mcc).
\]
In particular, if $\Mcc$ is seminormal and Cohen-Macaulay over $k$, then so are all of its skeletons.
\end{thm}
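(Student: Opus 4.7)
The plan is to reduce $\depth R=m_k(\Mcc)$ to a comparison between the local cohomology of $R$ and of its skeletons, which is controlled in low degrees by cellular truncations of the star complexes appearing in Theorem~\ref{lc-combin}. First I would record two elementary facts: (a) every restriction $\Mcc^{(t)}=\Mcc_{\Sigma^{(t)}}$ inherits seminormality from $\Mcc$, since the monoids $M_C$ for $C\in\Sigma^{(t)}$ are unchanged; and (b) $\dim k[\Mcc^{(t)}]=t$ for $0\le t\le\dim R$, so that $\Mcc^{(t)}$ is Cohen-Macaulay over $k$ if and only if $H^j_{\mm}(k[\Mcc^{(t)}])=0$ for all $j<t$.

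The key step is the comparison
\[
H^j_{\mm}(k[\Mcc^{(t)}])_a \cong H^j_{\mm}(R)_a \qquad \text{for all } a\in\Z^d \text{ and } j<t.
\]
To prove this, I would apply Theorem~\ref{lc-combin} to both $R$ and $k[\Mcc^{(t)}]$, which is legitimate by (a). Since $\st_{\Sigma^{(t)}}(-a)=\st_\Sigma(-a)\cap\Sigma^{(t)}$, a cell $e_C$ of $\Gamma_{\st_\Sigma(-a)}$ lies in $\Gamma_{\st_{\Sigma^{(t)}}(-a)}$ precisely when $\dim e_C=\dim C-1\le t-1$. Hence the chain complex $\mathcal{C}_{\pnt}(\Gamma_{\st_{\Sigma^{(t)}}(-a)})$ is the naive truncation of $\mathcal{C}_{\pnt}(\Gamma_{\st_\Sigma(-a)})$ at homological degree $t-1$, with the same differentials (the incidences $\delta(e_C,e_{C'})$ depend only on the pair of cones). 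A standard truncation argument for cochain complexes then yields $\widetilde{H}^{i}(\Gamma_{\st_{\Sigma^{(t)}}(-a)})\cong\widetilde{H}^{i}(\Gamma_{\st_\Sigma(-a)})$ for all $i<t-1$, which via Theorem~\ref{lc-combin} is exactly the claimed isomorphism in local cohomology.

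Given this comparison, the equivalence ``$\depth R \ge s$ iff $\Mcc^{(t)}$ is Cohen-Macaulay for all $t\le s$'' is almost immediate. For the forward direction, assuming $\depth R \ge s$, for any $t\le s$ and any $j<t$ one has $j<s$, hence $H^j_{\mm}(R)_a=0$ and so $H^j_{\mm}(k[\Mcc^{(t)}])_a=0$ for every $a$. For the converse, given $j<s$, take $t=j+1\le s$; the Cohen-Macaulay hypothesis gives $H^j_{\mm}(k[\Mcc^{(t)}])=0$, and since $j<t$ the comparison forces $H^j_{\mm}(R)=0$. Taking the supremum of $s$ for which these equivalent conditions hold yields $\depth R = m_k(\Mcc)$, and the ``in particular'' statement follows: if $\Mcc$ is Cohen-Macaulay, then $\depth R=\dim R$, whence $m_k(\Mcc)=\dim R$, forcing every skeleton to be Cohen-Macaulay.

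The main technical obstacle is bookkeeping around the cellular truncation: one must verify that the identification of $\Gamma_{\st_{\Sigma^{(t)}}(-a)}$ as a truncated chain complex is compatible with its description in Definition~\ref{star} as a quotient by $\mathcal{C}_{\pnt}(\Gamma_{\Sigma^{(t)}(-a)})$, and that the low-degree cohomology agreement indeed passes cleanly through Theorem~\ref{lc-combin}. Both are elementary once spelled out, but they warrant careful verification because this is where seminormality is being used and where the rank-selection phenomenon actually happens.
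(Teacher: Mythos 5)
Your proof is correct and follows essentially the same approach as the paper: both reduce to the comparison $H^j_{\mm}(R)_a\cong H^j_{\mm}(k[\Mcc^{(t)}])_a$ for $j<t$, obtained by applying Theorem~\ref{lc-combin} and observing that $\widetilde H^{j-1}(\Gamma_{\st_\Sigma(-a)})$ depends only on cells coming from cones of dimension at most $j+1$. The paper states this dependence tersely, while you spell it out via the identification of $\mathcal{C}_{\pnt}(\Gamma_{\st_{\Sigma^{(t)}}(-a)})$ as the naive truncation of $\mathcal{C}_{\pnt}(\Gamma_{\st_\Sigma(-a)})$ in degrees $\le t-1$ — a correct and useful elaboration of the same step, not a different argument.
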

\label{rankselect}
\begin{proof}
Denote $s=m_k(\Mcc)$. Apply Theorem \ref{lc-combin}, for all $i\ge 0, a\in \Z^d$, we have
\[
H^i_{\mm}(R)_a \cong \widetilde{H}^{i-1}(\Gamma_{\st _{\Sigma}(-a)})\otimes_{k}k(-a).
\]
Note that by construction, the right-hand side involves only cones in $\Sigma$ of dimension $\le i+1$. Thus for $i\le s-1$ and
 $a\in \Z^d$,
$$
H^i_{\mm}(R)_a \cong H^i_{\mm}(k[\Mcc ^{(s)}])_a.
$$
This implies that $H^i_{\mm}(R)=H^i_{\mm}(k[\Mcc ^{(s)}])=0$, since $\Mcc ^{(s)}$ is Cohen-Macaulay of dimension $s$. Thus $\depth R\ge s$.

The same argument shows that $H^i_{\mm}(R)=H^i_{\mm}(k[\Mcc ^{(s+1)}])=0$ for $i\le s-1$. Since $\Mcc ^{(s+1)}$ has dimension $s+1$ and is not Cohen-Macaulay,
 we have $0 \neq H^s_{\mm}(k[\Mcc ^{(s+1)}])=H^s_{\mm}(R)$. We conclude that $\depth R=s$.
\end{proof}

Let $M$ be a seminormal affine monoid and define the number $c_k(M)$ to be the maximal number $t$
such that  $k[M\cap F]$ is Cohen-Macaulay for all faces $F$ of $\R _+M$ with dimension $\le t$.
According to Theorem 5.3 of \cite{BLR}, we have $\depth k[M]\ge c_k(M)$. Thus we have the following corollary.
\begin{cor}
\label{monoidrankcompare}
Let $M$ be a seminormal affine monoid. Then
\[
m_k(M)\ge c_k(M).
\]
\end{cor}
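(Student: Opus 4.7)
The plan is to derive the corollary immediately from Theorem~\ref{rankselect} together with the inequality $\depth k[M]\ge c_k(M)$ from \cite[Theorem~5.3]{BLR} quoted just above the statement. First, I would reinterpret the seminormal affine monoid $M$ as a monoidal complex: let $\Sigma=\Fc(\R_+ M)$ be the face poset of the ambient cone, and define $\Mcc$ by $M_F=M\cap F$ for each $F\in\Sigma$. Since $\Sigma$ has a unique maximal cone, the example at the end of Section~\ref{background} identifies $k[\Mcc]$ with the affine monoid ring $k[M]$, and the skeletons $\Mcc^{(t)}$ from the definition preceding Theorem~\ref{rankselect} are exactly those implicit in the corollary's notation, so $m_k(M)=m_k(\Mcc)$.

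The one small thing to verify before applying Theorem~\ref{rankselect} is that $\Mcc$ is seminormal, i.e.\ each $M\cap F$ is a seminormal affine monoid. This is routine: given $x\in \Z(M\cap F)$ with $2x,3x\in M\cap F$, seminormality of $M$ forces $x\in M$, while $2x\in F$ forces $x\in F$ since $F$ is a cone; hence $x\in M\cap F$. By Proposition~\ref{seminormal-tfr} the toric face ring $k[\Mcc]$ is then seminormal, so Theorem~\ref{rankselect} yields
\[
m_k(M)=m_k(\Mcc)=\depth k[\Mcc]=\depth k[M].
\]
Combining with $\depth k[M]\ge c_k(M)$ from \cite[Theorem~5.3]{BLR} gives $m_k(M)\ge c_k(M)$, as desired.

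The corollary is essentially a two-line consequence of Theorem~\ref{rankselect}, so there is no real obstacle; the only point to notice is that Theorem~\ref{rankselect} applies to the monoidal complex of any single seminormal affine monoid, which in turn reduces to the mild observation that seminormality is inherited by the face submonoids $M\cap F$.
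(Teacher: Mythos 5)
Your proposal is correct and is essentially the paper's own (first) derivation: interpret the seminormal monoid as a monoidal complex over the face poset of $\R_+M$, invoke Theorem~\ref{rankselect} to get $m_k(M)=\depth k[M]$, and then appeal to the inequality $\depth k[M]\ge c_k(M)$ from \cite[Theorem~5.3]{BLR}. The paper also sketches a second, logically independent proof via pure shellability of the skeleta and \cite[Theorem~3.2]{IR}, but your argument reproduces the intended primary route, including the small check (implicit in the paper) that each $M\cap F$ is seminormal.
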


We can give another proof for Corollary \ref{monoidrankcompare} in the language of toric face rings. Indeed, if $c_k(M)=\dim k[M]$,
there's nothing to do. Otherwise, let $t=c_k(M)$. For every $i\le t$, the $i$-skeleton $M^{(i)}$ of the face poset of $\R_+M$ is
clearly pure shellable, see \cite{IR}, Section 3 for more discussion of shellability of toric face rings. Moreover, the monoid
rings on the faces of $M^{(i)}$ are Cohen-Macaulay. Thus apply Theorem 3.2 of \cite{IR}, we can conclude that $k[M^{(i)}]$ is
Cohen-Macaulay. This shows that $t\le m_k(M)$.


\section{Frobenius of toric face rings in positive characteristic}
\label{Frobenius}
\thispagestyle{empty}

In the following, assume that $k$ has positive characteristic $p$. Then we have the Frobenius endomorphism
of $R=k[\Mcc]$:\
\begin{center}
$F: R \rightarrow R, ~ t\mapsto t^p.$
\end{center}
$R$ is called {\em F-finite} if $R$ is a finite $F(R)$-module. We say that $R$ is {\em F-injective} if the induced maps of Frobenius on the local
 cohomology modules $H^i_{\mm}(R)$ are injective. We call $R$ to be {\em F-pure} if $F(R)$ is a pure subring of $R$, and {\em F-split} if $F(R)$
is a direct summand of $R$ as $F(R)$-module.

A finitely generated algebra over a perfect field of characteristic $p>0$ is $F$-finite. It is known that
$$
\text{$F$-split} \Rightarrow \text{$F$-pure} \Rightarrow \text{$F$-injective},
$$
and $F$-pure together with $F$-finite implies $F$-split, see Fedder \cite{Fed} for more information on these notions.

Hochster and Roberts \cite[Proposition 5.38]{HR} proved that Stanley-Reisner rings over $k$ are $F$-pure for all $\chara k>0$.
 Hochster and Roberts \cite[Theorem 5.33]{HR} also proved that positive affine seminormal monoid rings over $k$ are $F$-pure,
if $\chara k >0$ is different from a finite set of prime numbers. Bruns, Li and R\"omer describe these primes exactly.

\begin{thm}[\cite{BLR}, Proposition 6.2]
\label{F-injectivemonoid}
Let $M\subseteq \Z^d$ be a positive seminormal affine monoid and $k$ be field of characteristic $p>0$. Then the following are equivalent:
\begin{enumerate}
\item The prime ideal $(p)$ is not associated to the $\Z$-module $(\Z M\cap \R C)/\Z (M\cap C)$ for any face $C$ of $\R _+M$;
\item R is $F$-split;
\item R is $F$-pure;
\item R is $F$-injective.
\end{enumerate}
\end{thm}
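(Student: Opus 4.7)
The plan is to close the cycle $(i) \Rightarrow (ii) \Rightarrow (iii) \Rightarrow (iv) \Rightarrow (i)$. The implications $(ii) \Rightarrow (iii)$ and $(iii) \Rightarrow (iv)$ are formal: a direct summand of Frobenius is pure, and purity of $F$ descends to injectivity of $F$ on each local cohomology module $H^i_\mm(R)$. So the substantive work is in $(i) \Rightarrow (ii)$ and $(iv) \Rightarrow (i)$.

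For $(i) \Rightarrow (ii)$, I would define a candidate Frobenius splitting $\phi\colon F_*R \to R$ on monomials by
\[
\phi(X^b) = \begin{cases} X^{b/p} & \text{if } b/p \in M,\\ 0 & \text{otherwise,} \end{cases}
\]
which trivially satisfies $\phi \circ F = \mathrm{id}_R$. Verifying $R$-linearity in the form $\phi(X^{pa+b}) = X^a\phi(X^b)$ for $a,b\in M$ reduces to the claim: if $b\in M$ and $b/p\in \Z M$, then $b/p\in M$. By the Reid-Roberts formula, $b$ lies in $\Z(M\cap C)\cap \relint C$ for a unique face $C$ of $\R_+M$, so $b/p\in \R C \cap \Z M$ with $p\cdot(b/p)=b\in \Z(M\cap C)$. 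Condition $(i)$, applied to $C$, forces $b/p\in \Z(M\cap C)$, and seminormality of $M$ then returns $b/p$ to $M$.

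For $(iv) \Rightarrow (i)$ I would argue by contraposition. Suppose $(i)$ fails at some face $C$ and pick $x\in (\Z M\cap \R C)\setminus \Z(M\cap C)$ with $px\in \Z(M\cap C)$. Translating $x$ by a large multiple of a lattice point in $M\cap \relint C$ produces $w = -a \in \overline{M}\cap \relint C$ such that $w\notin \Z(M\cap C)$ (so $w\notin M$ by Reid-Roberts) while $pw\in \Z(M\cap C)\cap \relint C\subseteq M$. The monoid-ring version of Theorem \ref{lc-combin} identifies $H^i_\mm(R)_a$ with a combinatorial cohomology group attached to $\st_\Sigma(-a)$, and the Frobenius on $H^i_\mm(R)$ shifts the degree $a$ to $pa$. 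The sets $\st_\Sigma(w)$ and $\st_\Sigma(pw)$ differ precisely because $w$ lies in $\overline{M}\setminus M$ while $pw$ is already in $M$; I would use this asymmetry to exhibit a nonzero class in $H^{\dim C}_\mm(R)_a$ whose image in $H^{\dim C}_\mm(R)_{pa}$ vanishes, contradicting $(iv)$.

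The main obstacle will be $(iv) \Rightarrow (i)$: one must choose the representative $a$ carefully so that the combinatorial formula yields a nonzero graded piece, and then track the Frobenius action explicitly through the star subcomplexes to pin down its kernel in terms of the lattice data $(\Z M\cap \R C)/\Z(M\cap C)$. By comparison, $(i) \Rightarrow (ii)$ is essentially a bookkeeping exercise once the Reid-Roberts decomposition is available.
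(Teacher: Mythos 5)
The paper does not contain its own proof of this statement: it is quoted verbatim from Bruns, Li and R\"omer and labelled ``\cite{BLR}, Proposition 6.2'', with no \verb|proof| environment following. So there is nothing in this paper to compare your argument against directly, but the ingredients you invoke (the Reid--Roberts decomposition, Theorem \ref{lc-combin}) are exactly the tools the author uses elsewhere in Section \ref{Frobenius}, so the comparison is still meaningful.

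Your $(i)\Rightarrow(ii)$ is correct and complete. Your reduction to the containment ``$b\in M$ and $b/p\in\Z M$ imply $b/p\in M$'', i.e.\ $M\cap p\,\Z M=pM$, is exactly the lemma that the author isolates in the proof of Theorem \ref{charac} (there phrased as $M_D\cap p\,\Z M_D=pM_D$), and your derivation of it from the Reid--Roberts formula, condition $(i)$ and seminormality is the same argument. So that direction matches the approach the paper itself takes for the toric face ring analogue.

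The gap is in $(iv)\Rightarrow(i)$, and it is a real one, though repairable. As written you take an arbitrary face $C$ at which $(i)$ fails, build $w=-a\in\overline M\cap\relint C$ with $w\notin\Z(M\cap C)$ and $pw\in M$, and then hope that $\widetilde H^{i-1}(\Gamma_{\st_\Sigma(w)})\neq 0$ for $i=\dim C$. This can fail: if $C$ is not maximal among the faces where $(i)$ fails, the star $\st_\Sigma(w)=\{E:C\subseteq E,\ w\in\Z(M\cap E)\}$ can be a proper upper subset of $\{E:E\supsetneq C\}$, and its quotient chain complex can be exact. (Concretely, take $\dim\R_+M=3$, $C$ a ray contained in two facets $E_1,E_2$, and $w\in\Z(M\cap E_1)$ but $w\notin\Z(M\cap E_2)$; then $\st_\Sigma(w)=\{E_1,\R_+M\}$ and $\widetilde H^{\bullet}(\Gamma_{\st_\Sigma(w)})=0$.) The missing idea is to choose $C$ \emph{maximal} among the faces where $(i)$ fails. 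Then for every $E\supsetneq C$ the element $w$ cannot witness a $p$-torsion class in $(\Z M\cap\R E)/\Z(M\cap E)$, so $w\in\Z(M\cap E)$; hence $\st_\Sigma(w)=\{E:E\supsetneq C\}$ and $\st_\Sigma(pw)=\{E:E\supseteq C\}$. Since $C\neq\R_+M$, the order complex $\Delta(\st_\Sigma(C))$ is a cone and $\widetilde H^{\bullet}(\Gamma_{\st_\Sigma(pw)})=0$, whence $H^i_\mm(R)_{pa}=0$ for all $i$; on the other hand the short exact sequence of chain complexes with kernel the single cell $e_C$ in degree $\dim C-1$ gives $\widetilde H^{\dim C-1}(\Gamma_{\st_\Sigma(w)})\cong k$, so $H^{\dim C}_\mm(R)_a\neq 0$ and Frobenius kills it. Your write-up acknowledges that ``one must choose the representative $a$ carefully'' but never identifies maximality of $C$ as the decisive point; without it the argument does not go through, so this step needs to be supplied.
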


T. Yasuda \cite[Proposition 5.3]{Yas} proved that for affine monoid rings, $F$-purity (even $F$-splitting) is
equivalent to {\em weak normality}. We would like to thank Karl Schwede for pointing to us this result of Yasuda. We will see
that similar results are true for face rings of seminormal monoidal complexes supported on a rational pointed fan.

\begin{rem}
\label{Finjectiveglobal}
If $\chara k=p>0$ and $k[\Mcc]$ is $F$-injective, then $\Mcc$ is seminormal. Indeed, we can pass to the perfect closure of $k$ since the
$F$-injectivity of $k[\Mcc]$ is not affected by a flat base change. We know that $k[\Mcc]$ is reduced and $F$-finite. Now apply a theorem of Schwede
\cite[Theorem 4.7]{Sch}, which says that if a reduced $F$-finite ring with a dualizing complex is $F$-injective, then it is seminormal
(actually even {\em weakly normal}). Thus $\Mcc$ is seminormal.
\end{rem}
Now we characterize the $F$-split and $F$-pure properties of seminormal toric face rings.
\newpage
\begin{thm}
\label{charac}
Let $\chara k = p >0$. Let $\Sigma$ be a rational pointed fan in $\R^d$, and $\Mcc$ a seminormal monoidal complex supported on $\Sigma$.
The following statements are equivalent:
\begin{enumerate}
\item For each maximal cone $C\in \Sigma$, the monoid ring $k[M_C]$ is $F$-pure (equivalently, $F$-injective);
\item For each maximal cone $C\in \Sigma$ and each face $D\subseteq C$, the prime ideal $(p)$ is not associated to the
$\Z$-module $(\Z M_C\cap \R M_D)/\Z M_D$;
\item $k[\Mcc]$ is $F$-split;
\item $k[\Mcc]$ is $F$-pure.
\end{enumerate}
\end{thm}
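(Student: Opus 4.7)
The plan is to prove the cycle (2) $\Rightarrow$ (3) $\Rightarrow$ (4) $\Rightarrow$ (1) $\Rightarrow$ (2); three of these are easy, while the real content lies in (2) $\Rightarrow$ (3).

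For (1) $\Leftrightarrow$ (2): apply Theorem \ref{F-injectivemonoid} to each maximal monoid ring $k[M_C]$. The condition there that $(p)$ is not associated to $(\Z M_C\cap\R D)/\Z(M_C\cap D)$ for every face $D$ of $C=\R_+M_C$ coincides with condition (2), since the monoidal complex axiom $M_C\cap D=M_D$ yields $\Z(M_C\cap D)=\Z M_D$ and $\R D=\R M_D$. The implication (3) $\Rightarrow$ (4) is the standard fact that a split injection is pure. For (4) $\Rightarrow$ (1), each $k[M_C]\hookrightarrow k[\Mcc]$ is an algebra retract (Section \ref{background}), hence a pure subring, and pure subrings of $F$-pure rings are $F$-pure.

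The main step is (2) $\Rightarrow$ (3), where I would construct an $F$-splitting explicitly. The candidate is the $k$-linear map
\[
\sigma\:k[\Mcc]\to k[\Mcc], \qquad \sigma(X^a)=\begin{cases} X^{a/p} & \text{if } a/p\in|\Mcc|,\\ 0 & \text{otherwise,} \end{cases}
\]
which is well-defined by the monoidal complex axioms (if $a\in M_C$ and $a/p\in M_D$, then $a/p\in C\cap M_D=M_{C\cap D}$) and satisfies $\sigma(1)=1$. The substantive check is Frobenius linearity $\sigma(X^{pa}X^b)=X^a\sigma(X^b)$, which reduces to the case $X^{pa}X^b\ne 0$, say $pa+b\in M_C$ with $a,b\in M_C$. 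If $b/p\in|\Mcc|$, the monoidal complex axioms alone force $b/p\in M_C$, making both sides equal $X^{a+b/p}$. The delicate case is $b/p\notin|\Mcc|$: here the right-hand side vanishes and one must rule out $a+b/p\in M_E$ for some $E$. Suppose such an $E$ exists; then $a+b/p\in M_{C\cap E}\subseteq M_C$, so $b/p=(a+b/p)-a\in\Z M_C$. Fix a maximal cone $C_{\max}\supseteq C$ and let $F$ be the unique face of $C$ (hence of $C_{\max}$) with $b/p\in\relint F$. Then $b=p(b/p)\in\relint F\cap M_C\subseteq M_F$, so $b/p$ represents a $p$-torsion class in $(\Z M_{C_{\max}}\cap\R M_F)/\Z M_F$, which condition (2) forces to be zero; thus $b/p\in\Z M_F\cap\relint F$ and, by Reid--Roberts applied to the seminormal monoid $M_F$, $b/p\in M_F\subseteq|\Mcc|$, a contradiction.

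The main obstacle is exactly this last chain, where the lattice-torsion hypothesis (2) is translated into the monoid-theoretic statement that $b/p\in M_C$ whenever $b\in M_C$ and $b/p\in\Z M_C$. Once this is in place, $\sigma$ is an honest $F$-splitting of $k[\Mcc]$, proving (3) and closing the loop.
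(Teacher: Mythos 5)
Your proof is correct, and the easy implications (3)$\Rightarrow$(4), (4)$\Rightarrow$(1) via algebra retracts, and (1)$\Leftrightarrow$(2) via Theorem \ref{F-injectivemonoid} are exactly the paper's. The only real difference is in how the key implication (2)$\Rightarrow$(3) is packaged. The paper introduces an equivalence relation on $|\Mcc|$ (generated by $a\sim b$ when $a,b\in M_C$ and $a-b\in p\Z M_C$ for some $C$), observes that the classes give a $k[p\Mcc]$-module decomposition of $k[\Mcc]$, and then argues by induction on chain length that the class of $0$ equals $p|\Mcc|$; you instead write down the projection onto the summand $k[p\Mcc]$ directly as the Frobenius-linear map $\sigma$ and check Frobenius linearity on monomials, which dispenses with the chain/transitivity bookkeeping. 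The combinatorial heart is identical in both arguments: it is the claim $M_D\cap p\Z M_D=p M_D$ for every $D\in\Sigma$, proved by picking a maximal cone $C_{\max}\supseteq D$ and the face $F$ with $z\in\relint F$, using condition (2) to conclude $z\in\Z M_F$, and then invoking seminormality of $M_F$ via Reid--Roberts to get $\Z M_F\cap\relint F\subseteq M_F$; your final paragraph is word-for-word this lemma. One small point you gloss over: the $k$-linear $\sigma$ you define only exhibits $k[p\Mcc]$ as a $k[p\Mcc]$-direct summand of $k[\Mcc]$. To upgrade this to an honest $F$-splitting one still needs the observation that $k^p[p\Mcc]$ is a $k^p[p\Mcc]$-direct summand of $k[p\Mcc]$ (because $k^p\subseteq k$ splits as $k^p$-modules), so that the composite $k^p[p\Mcc]\subseteq k[p\Mcc]\subseteq k[\Mcc]$ splits. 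The paper records this step explicitly; it would strengthen your write-up to do the same, either by adding this remark or by defining $\sigma$ on coefficients to be a chosen $k^p$-linear splitting of $k^p\hookrightarrow k$ composed with the $p$-th root.
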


\begin{proof}

(iii)$\Rightarrow$(iv) is trivial. (iv)$\Rightarrow$(i) follows from a simple fact: if $R'\hookrightarrow R$ is an algebra retract of rings containing
a field of characteristic $p>0$ and $R$ is $F$-pure then $R'$ is $F$-pure. Note that (i)$\Rightarrow$(ii) follows Theorem \ref{F-injectivemonoid}.
 We prove that (ii)$\Rightarrow$(iii).

(ii)$\Rightarrow$(iii). We will prove that $k[p\Mcc]$ is a $k[p\Mcc]$-direct summand of $k[\Mcc]$. Then because $k^p[p\Mcc]$ is
a $k^p[p\Mcc]$-direct summand of $k[p\Mcc]$, we have $k^p[p\Mcc]$ is a $k^p[p\Mcc]$-direct summand of $k[\Mcc]$.

Define an equivalence relation $\sim$ on $|\Mcc|$ as follows: $a\sim b$ if and only if there is a finite sequence $a_1,\ldots, a_n$ such that
for $i=0,\ldots, n$ we have $a_i, a_{i+1}$ belongs to $M_C$ and $a_i-a_{i+1}\in p\Z M_C$ for some $C\in \Sigma$ (where $a_0=a, a_{n+1}=b$).
It is easy to check that the decomposition into equivalence classes of $|\Mcc|$ gives rise to $k[p\Mcc]$-module decomposition of $k[\Mcc]$ into
direct summands. Using (ii), we claim that $M_D\cap p\Z M_D =pM_D$ for every $D\in \Sigma$. From this, we will prove that the equivalence class of
 $0$ is exactly $p|\Mcc|$. This implies that $k[p\Mcc]$ is a direct summand in this $k[p\Mcc]$-module decomposition of
$k[\Mcc]$, thus finishes the proof.

Now assume that $z\in \Z M_D$ and $pz\in M_D$ with $D\in \Sigma$. Choose a maximal cone $C$ containing $D$ and a face $B$ of $D$ such that $z\in \relint B$.
Of course, $pz\in M_D\cap B\subseteq M_B$, $z\in \Z M_C$. Since $(p)$ is not associated to $(\Z M_C\cap \R M_B)/\Z M_B$ we have $z\in \Z M_B$.
Now $z\in \relint B\cap \Z M_B \subseteq M_B$, as $M_B$ is seminormal. So $z\in M_D$, as claimed.

Assume that there are elements $a_1,\ldots, a_n$ such that $a_i, a_{i+1}$ belongs to $M_C$ and $a_i-a_{i+1}\in p\Z M_C$ for some $C\in \Sigma$
(where $a_0=0$). We use induction on $n$ to show that  $a_n\in p|\Mcc|$.

If $n=1$, there is nothing to do. Assume that $n\ge 2$. We have $a_{n-1} \in pM_C$ for $C\in \Sigma$. For some $D\in \Sigma$, we have $a_{n-1}, a_n
\in M_D$ and $a_n-a_{n-1} \in p\Z M_D$. Of course $a_{n-1}\in pM_D$. Thus $a_n\in M_D\cap p\Z M_D =pM_D$, as desired.
\end{proof}

\begin{cor}[\cite{HR}, Proposition 5.38]
If $k$ is a field of characteristic $p>0$ and $I$ is a squarefree monomial ideal in $k[X_1,\ldots,X_n]$ (where $n\ge 1$), then $k[X_1,\ldots,X_n]/I$ is $F$-pure.
\end{cor}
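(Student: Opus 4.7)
The plan is to realize $k[X_1,\ldots,X_n]/I$ as a toric face ring $k[\Mcc]$ in the sense of this paper and then verify the hypotheses of Theorem \ref{charac} directly, so that the $F$-split (hence $F$-pure) conclusion is immediate.

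First, I would recall the standard identification: a squarefree monomial ideal $I \subseteq k[X_1,\ldots,X_n]$ corresponds to a simplicial complex $\Delta$ on the vertex set $\{1,\ldots,n\}$, and $k[X_1,\ldots,X_n]/I = k[\Delta]$ is the Stanley toric face ring of the rational pointed fan $\Sigma$ in $\R^n$ whose cones $C_F$ are generated by the standard basis vectors $\{e_i : i \in F\}$ for $F \in \Delta$, with $M_{C_F} = \bigoplus_{i\in F}\N e_i$. This is precisely the setup of Example (i) in Section \ref{background}.

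Next I would check that $\Mcc$ is seminormal. Each monoid $M_{C_F}$ is a free $\N$-monoid on the basis $\{e_i : i \in F\}$, hence normal, hence seminormal. So by Proposition \ref{seminormal-tfr} (or simply by definition applied face by face) $\Mcc$ is seminormal, and Theorem \ref{charac} applies.

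Finally I would verify condition (ii) of Theorem \ref{charac}. For a maximal cone $C = C_F$ and a face $D = C_G$ with $G \subseteq F$, one has $\Z M_C = \bigoplus_{i \in F}\Z e_i$ and $\Z M_D = \bigoplus_{i\in G}\Z e_i$, which is a direct summand of $\Z M_C$. Since $\R M_D = \bigoplus_{i\in G}\R e_i$, a simple coordinate computation gives
\[
\Z M_C \cap \R M_D = \bigoplus_{i\in G}\Z e_i = \Z M_D,
\]
so the $\Z$-module $(\Z M_C \cap \R M_D)/\Z M_D$ is zero, and in particular $(p)$ is not one of its associated primes for any $p$. Thus condition (ii) of Theorem \ref{charac} holds, and we conclude that $k[\Mcc] = k[X_1,\ldots,X_n]/I$ is $F$-split, a fortiori $F$-pure. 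There is no real obstacle here; the only thing to be careful about is correctly matching the combinatorial data of $\Delta$ to the fan/monoidal complex language used throughout the paper so that the hypotheses of Theorem \ref{charac} are literally what needs to be checked.
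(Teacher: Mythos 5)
Your proposal is correct and follows the paper's own argument: both realize $k[X_1,\ldots,X_n]/I$ as the Stanley--Reisner toric face ring $k[\Delta]$ and verify condition (ii) of Theorem \ref{charac} by observing that $\Z M_C\cap\R M_D=\Z M_D$ for each face $D$ of a cone $C$, so the relevant quotient vanishes and $F$-purity holds for every $p$. You spell out the seminormality check and the coordinate identification in slightly more detail, but the core step and the invocation of Theorem \ref{charac}(ii) are exactly what the paper does.
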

\begin{proof}
Assume that $k[\Delta]$ is the Stanley-Reisner ring corresponding to $I$. Note that for each cone $C$ of the geometric realization of $\Delta$ in
 $\R^n$ and each face $D$ of $C$, we have $\Z M_C\cap \R M_D=\Z M_D.$ So $k[\Delta]$ is $F$-pure for every $p>0$, by Theorem
\ref{charac} (ii).
\end{proof}

\begin{rem}
 Note that from Remark \ref{Finjectiveglobal} and Theorem \ref{charac}, with given $\Mcc$, for all but finitely many values of $\chara k >0$, we have that if
$k[\Mcc]$ is $F$-injective then $k[M_C]$ is $F$-injective (even $F$-split) for every $C\in \Sigma$. The next example shows that we cannot expect more than
that.
\end{rem}
\begin{ex}
\label{Finjectivenonhereditary}

Let $k=\Z/2\Z$. We construct a monoidal complex $\Mcc$ supported on a two-dimensional fan $\Sigma$ such that $k[\Mcc]$ is
$F$-injective but $k[M_C]$ is not $F$-injective for some maximal cone $C$ of $\Sigma$.

Let $\Sigma$ be the fan in $\R ^2$ consisting of two maximal cones $C$ with generators $x=(1,0), y=(0,2), t=(1,1)$ and $C'$ with two generators $y$ and $z=(-2,2)$. Define $\Mcc$ to be the monoidal complex supported on $\Sigma$ with two maximal monoids: $M$ generated by
$x,y, t$, $M'$ generated by $y, z$. The toric face ring $k[\Mcc]$ is $k[x,y,z,t]/(x^2y-t^2,xz,tz)$.

Let $D$ be the ray spanned by $y$, let $a=(0,-1)$. Note that $\Mcc$ is seminormal because $M'$ is normal
and $M$ is seminormal.

\bigskip
\bigskip

\setlength{\unitlength}{4cm}
\begin{picture}(1,1)
\thicklines
\put(1.5,0){\line(0,1){1}}
\put(1.5,0){\line(1,0){1}}
\put(1.5,0){\line(-1,1){0.85}}

\put(1.5,-0.09){$O$}
\put(1.4,0.40){$y$}
\put(1.5,0.4){\circle*{0.04}}
\put(1.75,-0.07){$x$}
\put(1.7,0){\circle*{0.04}}
\put(1.75,0.18){$t$}
\put(1.7,0.2){\circle*{0.04}}
\put(0.99,0.38){$z$}
\put(1.1,0.4){\circle*{0.04}}
\put(1.32,0.20){$-a$}
\put(1.5,0.2){\circle*{0.04}}
\put(1.55,0.8){$D$}
\put(2.15,0.6){$C$}

\put(1.5,0.4){\line(1,-1){.2}}
\put(1.7,0){\line(0,1){.2}}
\end{picture}

\bigskip
\bigskip

The last fact can be seen by checking the equality
\[
M=\bigcup_{F ~ \textnormal{is face of} ~ \R_+M}\Z (M\cap F) \cap \relint F.
\]
In detail, we have
\[ \Z (M\cap F) \cap \relint F= \begin{cases}  \{(m,n): m\ge 1, n\ge 1\} & \text{if $F=\R_+M$};\\
                                               \{(0,2n): n\ge 1\} & \text{if $F=D$};\\
                                               \{(m,0): m\ge 1\} & \text{if $F=Ox$};\\
                                               \{0\} & \text{if $F=\{0\}$.}
                                \end{cases}
\]

Thus
$$
\+{M}=\bigcup_{F ~ \textnormal{is face of} ~ \R_+M}\Z (M\cap F) \cap \relint F \subseteq M.
$$
and hence $M=\+{M}.$ It is not hard to see that $\overline{M} \setminus M= \{-a, -3a, -5a,\ldots\}$.

It is easy to check that $-a=t-x \in \Z M \cap \R D, -2a \in \Z M_D$ but $-a\notin \Z M_D$. Thus $k[M]$ is not $F$-injective. On the other hand,
we can check that $k[\Mcc]$ is $F$-injective.

Indeed, firstly, $R=k[\Mcc]$ is a 2-dimensional Cohen-Macaulay ring. This is done by a typical ``shellability'' argument. We have
an exact sequence of $R$-modules
\[
0\to R\to k[M]\oplus k[M'] \to k[M_D]\to 0.
\]
The middle rings are Cohen-Macaulay of dimension $2$ (see \cite[Corollary 5.4]{BLR}). Using the proof of \cite[Corollary 4.11]{BLR}, we can even
prove that $k[M]$ is Gorenstein. Indeed, the multigraded support of the $k$-dual of
$H^2_{\mm}(k[M])$ is
$$
N=\overline{M}\setminus \bigcup_{F ~ \textnormal{is facet of} ~ \R_+M}\Z (M\cap F)=(0,1)+M
$$
thus $k[M]$ is Gorenstein.

The last ring is Cohen-Macaulay of dimension $1$. Note that all of these rings have admissible $\N$-grading. Thus $R$ is Cohen-Macaulay, so
$H^i_{\mm}(R)=0, i=0,1.$

We can restrict our attention to those $b$ such that $H^2_{\mm}(R)_b \neq 0$. Apply Theorem \ref{vanishing}, this happens only when $-b\in
\overline{M}\cup \overline{M'} = \overline{M}\cup M'.$

Now apply Theorem \ref{lc-combin}. If $-b\in M\cup M'$, it is easy to see that $\st _{\Sigma}(-b)=\st _{\Sigma}(-2b)$, thus
$H^2_{\mm}(R)_b \cong H^2_{\mm}(R)_{2b}$ as $k$-vector spaces. We also have an isomorphism via Frobenius, by examining action of Frobenius
 on the Mayer-Vietoris sequence and applying the $5$-Lemma.

\begin{displaymath}
\xymatrix{ 0 \ar[r] & H^1_{\mm}(k[M_D])_b \ar[r] \ar[d]^{F} &  H^2_{\mm}(R)_b \ar[r] \ar[d]^{F} & H^2_{\mm}(k[M])_b  \oplus H^2_{\mm}(k[M'])_b \ar[r] \ar[d]^{F} & 0\\
           0 \ar[r] & H^1_{\mm}(k[M_D])_{2b} \ar[r]  &  H^2_{\mm}(R)_{2b} \ar[r] & H^2_{\mm}(k[M])_{2b}  \oplus H^2_{\mm}(k[M'])_{2b} \ar[r]  &  0}
\end{displaymath}

If $-b\in \overline{M}\setminus M$ then $b=a, 3a, 5a,\ldots$, in this case we have $\st _{\Sigma}(-b)=\{M\}$, $\st _{\Sigma}(-2b)=\{D, M, M'\}$,
and again by Theorem \ref{lc-combin}, we have $H^2_{\mm}(R)_b \cong k\cong H^2_{\mm}(R)_{2b}$. Examining action of Frobenius
 on the last part of the \v{C}ech complex shows that Frobenius induces an isomorphism $H^2_{\mm}(R)_b \cong H^2_{\mm}(R)_{2b}$.
\begin{displaymath}
    \xymatrix{ R_x \oplus R_y \oplus R_z \ar[r] \ar[d]^{F} &  R_C \oplus R_{C'} \ar[r] \ar[d]^{F} &  0\\
               R_x \oplus R_y \oplus R_z \ar[r]        &  R_C \oplus R_{C'} \ar[r] &  0}
\end{displaymath}

Thus $k[\Mcc]$ is $F$-injective.
\end{ex}

\begin{rem}
We observe that Example \ref{Finjectivenonhereditary} also gives a {\em negative} answer to the following question:

Let $R \hookrightarrow S$ is an algebra retract of finitely generated algebras over a field $k$ with $\chara k=p >0$. If $S$ is $F$-injective
and $R$ is Gorenstein, is it true that $R$ is $F$-injective?
\end{rem}

Finally, we prove that $R=k[\Mcc]$ is weakly $F$-regular if and only if $R$ is a normal affine monoid ring. Recall that if
$\chara k=p>0$, then $R$ is {\em weakly $F$-regular} if every ideal of $R$ is tightly closed. $R$ is {\em $F$-regular} if
the localization of $R$ at any multiplicative subset is weakly $F$-regular, see Hochster and Huneke \cite{HH} for more details.
Hochster and Huneke \cite[Corollary 5.11]{HH} proved that a weakly $F$-regular ring must be normal. Thus apply Theorem
\ref{normalityisdegenerate}, we have the following.
\begin{prop}
If $\chara k=p>0$ and $k[\Mcc]$ is weakly $F$-regular, then $\Sigma$ is the face poset of a cone and $k[\Mcc]$ is a normal affine
monoid ring.
\end{prop}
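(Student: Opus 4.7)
The plan is straightforward, assembling two ingredients already mentioned in the excerpt. The key point is that weak $F$-regularity is a very strong hypothesis that forces normality, and for toric face rings normality is itself already a very rigid condition, by Theorem \ref{normalityisdegenerate}.

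First, I would invoke the result of Hochster and Huneke \cite[Corollary 5.11]{HH} cited just above the statement: every weakly $F$-regular Noetherian ring is normal. Since $R = k[\Mcc]$ is a finitely generated $k$-algebra, this applies directly and gives that $R$ is a normal ring.

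Next, I would apply Theorem \ref{normalityisdegenerate} to the normal ring $R = k[\Mcc]$. That theorem, proved via Lemma \ref{reducedgraded} (which uses the $\mathbb{N}^d$-grading and connectedness of $\Spec R$ to promote reduced$+$normal to domain) and Lemma \ref{tfrdomain} (which forces $\Sigma$ to be the face poset of a single cone once $k[\Mcc]$ is a domain), immediately yields that $\Sigma = \Fc(C)$ for a single maximal cone $C$ and that $k[\Mcc] = k[M_C]$ is a normal affine monoid ring. This is all that the proposition asserts.

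There is no real obstacle here: the proof is just the two-line composition ``weakly $F$-regular $\Rightarrow$ normal $\Rightarrow$ (by Theorem \ref{normalityisdegenerate}) normal affine monoid ring with $\Sigma = \Fc(C)$''. The only thing worth writing down explicitly is the citation to \cite[Corollary 5.11]{HH} and a pointer back to Theorem \ref{normalityisdegenerate}; no further combinatorial or cohomological work is needed. In particular one does not need any characteristic-$p$ machinery beyond the single fact that weakly $F$-regular rings are normal.
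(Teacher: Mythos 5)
Your proof is correct and matches the paper's argument exactly: the paper likewise cites Hochster--Huneke \cite[Corollary 5.11]{HH} to get normality from weak $F$-regularity and then applies Theorem \ref{normalityisdegenerate}. No differences to note.
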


Note that normal affine monoid rings are always $F$-regular, because they are direct summands of Laurent polynomial
rings, see \cite[Exercise 6.1.10]{BH}.

\section*{Acknowledgment}

The author is grateful to Tim R\"omer for his generous suggestions and inspiring comments on the topic and the writing
of this paper. We wish to thank Matteo Varbaro for introducing to us the results of Hochster and Roberts \cite{HR}, and
suggesting the problem of characterizing $F$-purity of toric face rings. We are grateful to Neil Epstein for carefully mentoring
many interesting aspects of seminormality and Frobenius morphism and for his thorough literature guidance. He also read and
corrected to us several typos in an earlier draft.

We want to thank Alberto Fernandez Boix for correcting to us several typos and mentioning that Corollary \ref{BBR-lc} first
appeared in \cite{BBR}. Karl Schwede kindly informed us the result of Yasuda \cite[Proposition 5.3]{Yas} which is relevant in
our context.
%
%
%
%



\begin{thebibliography}{99}
\bibitem{BBR}
M.\ Brun, W.\ Bruns, T.\ R\"omer, {\em Cohomology of partially ordered sets and local cohomology of section rings}. Adv.
Math. {\bf 208} (2007), 210--235.
\bibitem{BR}
M.\ Brun and T.\ R\"omer, {\em Subdivision of toric complexes}. J. Algebraic Combin. {\bf 21} (2005), 423--448.
\bibitem{BR2}
M.\ Brun and T.\ R\"omer, {\em On algebras associated to partially ordered sets}. Math. Scand. {\bf 103} (2008), 169--185.
\bibitem{BG}
W.\ Bruns and J.\ Gubeladze, {\em Polytopes, rings and K-theory}. Springer Monographs in Mathematics, Springer (2009).
\bibitem{BH}
W.\ Bruns and J.\ Herzog, {\em Cohen-Macaulay rings. Rev. ed.}. Cambridge Studies in Advanced Mathematics {\bf 39}, Cambridge
University Press (1998).
\bibitem{BLR}
W.\ Bruns, P.\ Li and T.\ R\"omer, {\em On seminormal monoid rings}. J. Algebra {\bf 206} (2006), 361--386.
\bibitem{BKR}
W. \ Bruns, R. \ Koch and T. \ R\"omer, {\em Gr\"obner bases and Betti numbers of monoidal complexes}. Michigan Math. J. {\bf 57}
(2008), 71--97.
\bibitem{Duva}
A.\ M.\ Duval, {\em Free resolutions of simplicial posets}. J. Algebra {\bf 188} (1997), 363--399.
\bibitem{Fed}
R.\ Fedder, {\em $F$-purity and rational singularity}. Trans. Amer. Math. Soc. {\bf 278} (1983), 461--480.
\bibitem{Hibi}
T. \ Hibi, {\em Quotient algebras of Stanley-Reisner rings and local cohomology}. J. Algebra {\bf 140} (1991), 336--343.
\bibitem{Hochster}
M.\ Hochster, {\em Rings of invariants of tori, Cohen-Macaulay rings generated by monomials, and polytopes}, Ann. of Math. (2) {\bf 96} (1972), 318--337.
\bibitem{HH}
M.\ Hochster and C.\ Huneke, {\em Tight closure, invariant theory and the Brian\c{c}on-Skoda theorem}. J. Amer. Math. Soc. {\bf 3} (1990), 31--116.
\bibitem{HR}
M.\ Hochster and L.\ Roberts, {\em The purity of the Frobenius and local cohomology}. Adv. Math. {\bf 21} (1976), 117--172.
\bibitem{IR}
B.\ Ichim and T.\ R\"omer, {\em On toric face rings}. J. Pure Appl. Algebra {\bf 210} (2007), 249--266.
\bibitem{Mun}
J.\ R. \ Munkres, {\em Topological results in combinatorics}. Michigan Math. J. {\bf 31} (1984), 113--128.
\bibitem{Nguyen}
D.\ H.\ Nguyen, {\em On the Koszul property of toric face rings}, submitted.
\bibitem{Nguyenthesis}
D.\ H.\ Nguyen, {\em Homological and combinatorial properties of toric face rings}. PhD dissertation, Osnabr\"uck (2012).
\bibitem{OkaYan}
R.\ Okazaki and K.\ Yanagawa, {\em Dualizing complex of a toric face ring}. Nagoya Math. J. {\bf 196} (2009), 87--116.
\bibitem{RR}
L.\ Reid and L.\ G.\ Roberts, {\em Monomial subrings in arbitrary dimension}. J. Algebra {\bf 236} (2001), 703--730.
\bibitem{Sch}
K.\ Schwede, {\em $F$-injective singularities are Du Bois}. Amer. J. Math. {\bf 131} (2009), 445--473.
\bibitem{Smi}
D.\ Smith, {\em On the Cohen-Macaulay property in commutative algebra and simplicial topology}. Pacific J. Math. {\bf 141} (1990), 165--196.
\bibitem{Sta1}
R.\ P.\ Stanley, {\em The upper bound conjecture and Cohen-Macaulay rings}. Stud. Appl. Math. {\bf 54} (1975), 135--142.
\bibitem{Sta2}
R.\ P.\ Stanley, {\em Generalized h-vectors, intersection cohomology of toric varieties, and related results}. in {\em Commutative algebra and combinatorics},
Adv. Stud. Pure Math. {\bf 11} (1987), 187--213.
\bibitem{Swa}
R.\ G.\ Swan, {\em On seminormality}. J. Algebra {\bf 67} (1980), 210--229.
\bibitem{Tra}
C.\ Traverso, {\em Seminormality and Picard group}. Ann. Scuola Norm. Sup. Pisa. {\bf 24} (1970), 585--595.
\bibitem{Yas}
T.\ Yasuda, {\em On monotonicity of $F$-blowup sequences}. Illinois J. Math. {\bf 53} (2009), 101--110.
\end{thebibliography}
\end{document}